\newcommand*\linenomathpatch[1]{%
   \expandafter\pretocmd\csname #1\endcsname {\linenomath}{}{}%
   \expandafter\pretocmd\csname #1*\endcsname{\linenomath}{}{}%
   \expandafter\apptocmd\csname end#1\endcsname {\endlinenomath}{}{}%
   \expandafter\apptocmd\csname end#1*\endcsname{\endlinenomath}{}{}%
 }
\newcommand*\linenomathpatchAMS[1]{%
    \expandafter\pretocmd\csname #1\endcsname {\linenomathAMS}{}{}%
    \expandafter\pretocmd\csname #1*\endcsname{\linenomathAMS}{}{}%
    \expandafter\apptocmd\csname end#1\endcsname {\endlinenomath}{}{}%
    \expandafter\apptocmd\csname end#1*\endcsname{\endlinenomath}{}{}%
}
\let\linenomathAMS\linenomathWithnumbers
\patchcmd\linenomathAMS{\advance\postdisplaypenalty\linenopenalty}{}{}{}
\let\linenomathAMS\linenomathNonumbers
\DeclareMathOperator{\dcut}{\textsl{d}_{\square}}
\DeclareMathOperator{\Forb}{\textsl{Forb}}
\DeclareMathOperator{\Forbmon}{\textsl{Forb}_{mon}}
\def\Forbhom^*{\Forb_{\textsl{hom}}^*} 
\DeclareMathOperator{\WOWZER}{{\tt WOWZER}}
\DeclareMathOperator{\poly}{{\tt poly}}
\DeclareMathOperator{\dist}{\textsl{d}_1}
\let\hom\relax \DeclareMathOperator{\hom}{ind}
\def\d#1{d_{#1}}
\def\dapprox#1#2#3{d^{(#1,#2)}_#3}
\def\cPstar#1#2{\mathcal{P}^*_{(#1,#2)}}
\def\AlmostReducible#1#2{\cG_{#1}^{(#2)}}
\def\cPstar#1{\mathcal{P}^*_{(#1)}}
\def\AlmostReducible#1{\cG_{#1}}
\def\eps{\varepsilon} \def\phi{\varphi} \def\cP{\mathcal{P}}
\def\cV{\mathcal{V}}  
\def\cG{\mathcal{G}}  \def\cF{\mathcal{F}}
\def\RR{\mathbb{R}} \def\NN{\mathbb{N}}  
\def\PP{\mathbb{P}} \def\({\left(} \def\){\right)} \def\>{\rangle}
\def\<{\langle}
\let\epsilon\varepsilon
\def\Floor#1{\lfloor#1\rfloor}
\long\def\FULLVERSION#1{}
\newcommand{\quot}[2]{\mathchoice%
{\left.\raisebox{.1em}{$\displaystyle{#1}$}\kern-1pt/\raisebox{-.2em}{$\displaystyle{#2}$}\right.}
{\left.\raisebox{.1em}{${#1}$}\kern-1pt/\raisebox{-.2em}{${#2}$}\right.}
{\left.\raisebox{.1em}{$\scriptstyle{#1}$}\kern-1pt/\raisebox{-.2em}{$\scriptstyle{#2}$}\right.}
{\left.\raisebox{.1em}{$\scriptscriptstyle{#1}$}\kern-1pt/\raisebox{-.2em}{$\scriptscriptstyle{#2}$}\right.}
} 
\date{\today, \currenttime}
\title{On the query complexity of estimating the distance to hereditary
graph properties}
\author[Hoppen]{Carlos Hoppen}
\address{Instituto de Matem\'atica e Estat\'istica, UFRGS, Avenida Bento
  Gon\c{c}alves, 9500, 91501-970 Porto Alegre, RS, Brazil {\rm(C. Hoppen)}} 
\email{choppen@ufrgs.br}
\author[Kohayakawa]{Yoshiharu Kohayakawa}
\address{Instituto de Matem\'atica e Estat\'istica, Universidade de
  S\~{a}o Paulo, Rua do Mat\~{a}o 1010, 05508-090 S\~{a}o Paulo,
  Brazil {\rm(Y. Kohayakawa and H. Stagni)}} 
\email{\{yoshi|stagni\}@ime.usp.br}
\author[Lang]{Richard Lang}
\address{Institut f\"ur Informatik, Universit\"at Heidelberg, Im Neuenheimer Feld 205, 69120 Heidelberg, Germany {\rm(R. Lang)}}
\email{lang@informatik.uni-heidelberg.de}
\author[Lefmann]{Hanno Lefmann} 
\address{Fakult\"at f\"ur Informatik, Technische
Universit\"at Chemnitz, Stra\ss{}e der Nationen 62, 09111 Chemnitz,
Germany {\rm(H. Lefmann)}}
\email{lefmann@informatik.tu-chemnitz.de}
\author[Stagni]{Henrique Stagni}
\thanks{%
  C.~Hoppen acknowledges the support of CNPq (308054/2018-0) and
  FAPERGS (19/2551-0001727-8). 
  C.~Hoppen and H.~Lefmann acknowledge the support of CAPES and DAAD
  via PROBRAL (CAPES Proc.~88881.143993/2017-01 and DAAD~57391132 and~57518130).
  Y.~Kohayakawa was partially supported by CNPq (311412/2018-1, 
  423833/2018-9) and FAPESP (2018/04876-1).
  H.~Stagni was supported by
  FAPESP (2015/15986-4, 2017/02263-0) and CNPq (141970/2015-4).
  R. Lang acknowledges support by Millennium Nucleus Information and
  Coordination in Networks ICM/FIC RC130003.
  This study was financed in part by CAPES, Coordenação de
  Aperfeiçoamento de Pessoal de Nível Superior, Brazil, Finance
  Code~001. 
  CNPq is the National Council for Scientific and Technological
  Development of Brazil.  FAPESP is the S\~ao Paulo Research
  Foundation. Some results in this paper appeared in preliminary form
  in the Proceedings of EUROCOMB~2017~\cite{hoppen_et_al:eurocomb}.}
\newcommand*\patchAmsMathEnvironmentForLineno[1]{%
\expandafter\let\csname old#1\expandafter\endcsname\csname #1\endcsname
\expandafter\let\csname oldend#1\expandafter\endcsname\csname end#1\endcsname
\renewenvironment{#1}%
{\linenomath\csname old#1\endcsname}%
{\csname oldend#1\endcsname\endlinenomath}}%
\newcommand*\patchBothAmsMathEnvironmentsForLineno[1]{%
\patchAmsMathEnvironmentForLineno{#1}%
\patchAmsMathEnvironmentForLineno{#1*}}%
\begin{document} 
\onehalfspace
\allowdisplaybreaks[2]
\footskip=28pt
\newtheorem{theorem}{Theorem}[section]
\newtheorem{cor}[theorem]{Corollary}
\newtheorem{lemma}[theorem]{Lemma}
\newtheorem{fact}[theorem]{Fact}
\newtheorem{property}[theorem]{Property}
\newtheorem{corollary}[theorem]{Corollary}
\newtheorem{proposition}[theorem]{Proposition}
\newtheorem{claim}[theorem]{Claim}
\newtheorem{conjecture}[theorem]{Conjecture}
\newtheorem{definition}[theorem]{Definition}
\theoremstyle{definition}
\newtheorem{example}[theorem]{Example}
\newtheorem{remark}[theorem]{Remark}

\begin{abstract}
	Given a family of graphs $\cF$, we prove that the normalized edit distance of any given graph $\Gamma$ to being
    induced $\cF$-free is estimable with a query complexity that depends
    only on the bounds of the Frieze--Kannan Regularity Lemma and on a Removal Lemma for $\cF$.
\end{abstract}

\maketitle

\section{Introduction and main results}

Property testing is concerned with very fast (randomized) algorithms for approximate decisions, where the aim is to distinguish between graphs that satisfy a given property and graphs that are `far' from satisfying this property. Research in this area has achieved tremendous success since its systematic study was initiated by Goldreich, Goldwasser and Ron~\cite{Goldreich:1998:PTC:285055.285060}. We refer to the monograph of Goldreich~\cite{goldreich_2017} for a detailed account of the evolution of this area.

In this paper, we consider randomized algorithms that have the ability to
query whether any desired pair of vertices in the input graph is
adjacent or not. Let $\cG$ be the set of finite simple graphs, and let $\cG(V)$ be the set of such graphs with vertex set $V$. 
We shall consider subsets $\cP$ of~$\cG$ that are closed under isomorphism, which are called \emph{graph properties}. To avoid technicalities, we restrict ourselves to graph properties $\cP$ such that $\cP \cap
\cG(V) \neq \emptyset$ for $V \neq \emptyset$. 
This includes all \emph{monotone} and \emph{hereditary} graph properties that contain arbitrarily large graphs, which~are 
properties that are inherited by subgraphs and by induced subgraphs, respectively. 

Here, we shall focus on hereditary properties. It is easy to see that for every hereditary property $\mathcal{P}$ there is a graph family $\mathcal{F}$ such that $\mathcal{P}$ equals $\Forb(\mathcal{F})$, which is the set of all graphs that do not have induced copies of any $F \in \mathcal{F}$. For example, one can take $\mathcal{F}$ to be the family of all graphs not satisfying $\mathcal{P}$.

A graph property $\mathcal{P}$ is said to be \emph{testable} if, for every
$\eps>0$, there exist a positive integer $q_\cP=q_\cP(\eps)$, called the
\emph{query complexity}, and a randomized algorithm $\mathcal{T}_\mathcal{P}$,
called a \emph{tester}, which may perform at most $q_\cP$ queries in an arbitrary input
graph, satisfying the following property. For an $n$-vertex input graph
$\Gamma$, the algorithm $\mathcal{T}_\mathcal{P}$ outputs `ACCEPT' with probability at least 2/3 if $\Gamma \in \mathcal{P}$ and `REJECT' with probability at least 2/3 if no graph obtained from $\Gamma$ by the addition or removal of fewer than $\eps n^2$ edges satisfies $\mathcal{P}$.
Alon and Shapira~\cite{AS2008} proved that every hereditary graph
property is testable. Moreover, in joint work with Fischer and Newman~\cite{AFNS2009}, they found a combinatorial characterization of testable graph properties. Recently, such a characterization has also been obtained for uniform hypergraphs by Joos, Kim, K\"{u}hn and Osthus~\cite{JKKO2017}.

Property testing may be stated in terms of graph distances: given two graphs $\Gamma$ and
$\Gamma'$ on the same vertex set, that is $V(\Gamma) = V(\Gamma')=V$, we define the \emph{normalized edit distance} between
$\Gamma$ and $\Gamma'$ by {$\dist(\Gamma,\Gamma')=
\left|E(\Gamma) \triangle E(\Gamma')\right| /|V|^2$, where $E(\Gamma) \triangle
E(\Gamma')$} denotes the symmetric difference of their edge sets. If $\cP$ is a graph property,  let the distance between a graph $\Gamma$ and $\cP$ be
\begin{equation*}
\dist(\Gamma,\cP)=\min\{\dist(\Gamma,\Gamma')\colon
V(\Gamma')=V(\Gamma) \textrm{ and }\Gamma'\in \cP\}.  
\end{equation*} 
Thus a graph property $\cP$ is testable if
there is a tester whose query complexity is bounded by a function of $\epsilon$
that distinguishes with probability at least $2/3$ between the cases
$\dist(\Gamma,\cP)=0$ and $\dist(\Gamma,\cP) \geq \eps$. Even though
the definition of property testing is quite flexible regarding
testers, Goldreich and 
Trevisan~\cite[Theorem~2]{Goldreich:2003:TTR:897701.897703} 
have shown that it is sufficient to
consider \textit{canonical testers}, which randomly choose a subset
$X$ of vertices in $\Gamma$ and then verify whether the induced subgraph
$\Gamma[X]$ satisfies some related property $\mathcal{P'}$.

Similarly, a function $z \colon \cG\to\RR$ is called a \emph{graph parameter}  if it is
invariant under relabeling vertices. A graph parameter $z\colon
\cG\to\RR$ is \emph{estimable} (or \emph{testable}) if, for every
$\eps>0$ and every large enough graph $\Gamma$, with probability at
least $2/3$, the value of $z(\Gamma)$ can be approximated up to an
additive error of $\eps$ by an algorithm that only has access to a
subgraph of $\Gamma$ induced by a set of vertices of size $s_z=s_z(\eps)$, chosen uniformly at random. 
The query complexity of such an algorithm is $\binom{s_z}2$ and the size
$s_z$ is called
its \emph{sample complexity}.  Estimable parameters have been introduced by Fischer and Newman~\cite{FN07}.
Borgs, Chayes, Lov\'{a}sz, S\'{o}s and Vesztergombi~\cite{Borgs2008} 
later gave a complete characterization of estimable graph parameters which, in particular, also implies that the distance
from hereditary graph properties is estimable. However, their approach
does not provide explicit bounds on the sample complexity. With a
different strategy, Fischer and Newman~\cite{FN07} proved that the
distance to every testable property is estimable, providing
$\WOWZER$\footnote{This function is one level higher in the Wainer
  hierarchy than the tower function.}-type bounds for the sample
complexity. Among other things, Alon, Shapira and Sudakov improved
this strategy for monotone
properties~\cite[Corollary~1.2]{ASS09},\footnote{For hereditary
  properties, see the comment at the end of \S1.2 in~\cite{ASS09}.} but
the query complexity still depends on constants given by strong
versions of the Regularity Lemma.

Goldreich and
Trevisan~\cite[Proposition~D.2]{Goldreich:2003:TTR:897701.897703}
proved that if~$\cP=\Forb(\cF)$ is a testable hereditary property, then, given a
graph~$\Gamma$, one can test~$\cP$ by checking whether or
not~$\Gamma[X]$ belongs to~$\cP$ for a random set~$X\subset V(\Gamma)$
of bounded size.  It follows that a hereditary property $\Forb(\cF)$
being testable is equivalent to the existence of an \emph{induced
  Removal Lemma} for $\cF$.
Our results rely on the following version of the removal lemma, which
was proved by Alon and Shapira~\cite{AS2008}. 
Given graphs $F$ and $\Gamma$, an induced homomorphism from $F$ to $\Gamma$ is a mapping $\phi \colon V(F) \to V(\Gamma)$  that preserves adjacency and non-adjacency.
More precisely, $uv \in E(F)$ if and only of $\phi(u)\phi(v) \in E(\Gamma)$.
We denote by $\hom(F,\Gamma)$ the induced homomorphism density of $F$ in $\Gamma$, which is the number of homomorphisms from $F$ to $\Gamma$ divided by the number of mappings from $V(F)$ to $V(\Gamma)$.
In other words, $\hom(F,\Gamma)$ is the probability that  a random mapping~$\phi\colon V(F)\to V(\Gamma)$ is an induced homomorphism.

\begin{lemma}[Induced Removal Lemma]\label{lem:removal} 
	For every~$\eps > 0$ and every (possibly infinite) family~$\cF$ of graphs, there exist~$M = M(\eps,\cF)$, $\delta = \delta(\eps,\cF)> 0$ and~$n_0=n_0(\eps,\cF)$
	such that the following holds. If a graph~$\Gamma$ on~$n \geq n_0$ vertices
	satisfies~$\dist(\Gamma,\Forb(\cF))\geq \eps$, then there is a graph ~$F\in\cF$ with~$|V(F)|\leq
	M$ such that~$\hom(F,\Gamma)\geq \delta$.  %
\end{lemma}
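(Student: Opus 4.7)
The plan is to follow the approach of Alon and Shapira~\cite{AS2008}. The proof proceeds in three main steps: apply a strong form of the regularity lemma to $\Gamma$; clean $\Gamma$ to obtain a graph $\Gamma^*$ with highly structured pair densities; and extract a small forbidden pattern $F\in\cF$ via a Ramsey-type argument before applying the induced counting lemma to produce many copies in $\Gamma$.

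First, I would invoke a \emph{strong regularity lemma}, in which the regularity parameter is allowed to depend on the final partition size. This yields an equitable partition $\cV=\{V_1,\ldots,V_k\}$ of $V(\Gamma)$ together with a coarser partition $\cU$ such that almost all pairs $(V_i,V_j)$ are $\eps'$-regular with $\eps'\ll 1/k$, and the densities $d(V_i,V_j)$ are very close to constants $d_{UU'}$ depending only on the cluster classes $U,U'\in\cU$ containing $V_i$ and $V_j$. Next, construct $\Gamma^*$ on $V(\Gamma)$ by modifying edges inside irregular pairs, inside the classes $V_i$ themselves, and inside regular pairs whose density lies below a small threshold $\eta$ (delete all edges) or above $1-\eta$ (insert all edges). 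A careful choice of parameters gives $\dist(\Gamma,\Gamma^*)<\eps$, so the hypothesis $\dist(\Gamma,\Forb(\cF))\geq\eps$ forces $\Gamma^*\notin\Forb(\cF)$, i.e., $\Gamma^*$ contains an induced copy of some $F_0\in\cF$.

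The main obstacle, and the key step, is bounding the size of the witness when $\cF$ is infinite, since $F_0$ could a priori be arbitrarily large. Here one exploits the fact that in $\Gamma^*$ every pair $(V_i,V_j)$ is of one of only three types (``empty'', ``complete'', or ``intermediate with density close to $d_{UU'}$''); projecting the embedding of $F_0$ onto $\cU$ and applying a pigeonhole/Ramsey argument to the distribution of $V(F_0)$ among the clusters of $\cU$ yields, using the hereditariness of $\Forb(\cF)$, a template $F\in\cF$ on at most $M:=M(\eps,\cF)$ vertices that is realized as an induced subgraph of $\Gamma^*$ with each vertex placed in a distinct regular, intermediate-density class. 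A final application of the induced counting lemma to these $|V(F)|$ classes produces at least $\delta\cdot n^{|V(F)|}$ induced homomorphisms from $F$ into $\Gamma$ for a suitable $\delta=\delta(\eps,\cF)>0$, giving $\hom(F,\Gamma)\geq\delta$ as required.
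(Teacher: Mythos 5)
The paper does not prove this lemma at all: it is imported verbatim as a known result of Alon and Shapira \cite{AS2008}, so there is no internal proof to compare against. Your outline is the standard argument from \cite{AS2008} (building on \cite{Alon2000}): strong regularity, a cleaning step, a finiteness argument to bound the size of the witness for infinite $\cF$, and an induced counting lemma. As a sketch of that argument it is structurally right, but two steps are stated in a form that would not survive being written out.

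First, you cannot in general place every vertex of $F$ in a distinct class so that all pairs used have \emph{intermediate} density. Take $\cF=\{K_3\}$ and $\Gamma=K_n$: every pair of classes has density near $1$ and there are no intermediate pairs, yet the lemma must produce $\delta n^3$ triangles. The correct condition is \emph{consistency} with the pair types: an edge of $F$ may be assigned to any regular pair of density at least $\eta$ (including near-complete ones), a non-edge to any pair of density at most $1-\eta$, and several vertices of $F$ may land in the same class provided that class is, by a Ramsey argument inside the class, homogeneous of the matching type. This is exactly what the colored regularity graphs of \cite{AS2008} encode, and it is why the cleaning step records, for each pair, the \emph{set} of admissible adjacency values rather than a single density. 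Second, the reduction from $F_0$ to a bounded $F\in\cF$ is not an application of the hereditariness of $\Forb(\cF)$ (an induced subgraph of $F_0$ need not lie in $\cF$). It is a pure finiteness statement: there are only boundedly many colored regularity graphs on at most $K(\eps)$ parts; for each one that admits an embedding of \emph{some} member of $\cF$, fix a smallest such member, and let $M$ be the maximum of their orders (the function $\Psi_\cF$ of \cite{AS2008}). This is the only place where $M$ and $\delta$ acquire their dependence on $\cF$, and it is the step your sketch needs to make explicit for the lemma to hold for infinite families.
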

The first induced Removal Lemma was proved using a strong version of the
Regularity Lemma and therefore had upper bounds on $1/\delta$ and
$n_0$ of size $\WOWZER(\poly(1/\eps))$~\cite{Alon2000}.
The best known upper bound on the induced Removal Lemma is due to Conlon and Fox~\cite{conlon12:_bound}, but is still of tower-type with a tower height depending on both epsilon and the property $\Forb(\cF)$. It is known that no such upper bound may hold uniformly for all hereditary properties~\cite{AS08}; however, the height of the tower is known to be polynomial in $1/\eps$ if $\mathcal{F}=\{F\}$. Alon and Shapira~\cite{alon_shapira_2006} characterized all graphs $F$, with the possible exception of $F \in \{P_4,C_4\}$, such that the induced Removal Lemma for $\mathcal{F}=\{F\}$ holds for $\delta(\eps,F)=\poly(\eps)$. The case $F=P_4$ was shown to satisfy this property by Alon and Fox~\cite{AF12}, while Gishboliner and Shapira~\cite{GS19} made progress in the case $F=C_4$.
The question of deciding which hereditary properties admit a Removal
Lemma that may be proven without the use of the Regularity Lemma was raised
by Goldreich~\cite{Goldreich2011}, and by  Alon and Fox~\cite{AF12}, among others, and is currently under research. 

The goal of this work is to relate hereditary parameter estimation
directly to the bounds of Removal Lemmas by avoiding Szemer\'{e}di's
Regularity Lemma.
In~\cite{CKL+17},
the current authors proved a similar result for monotone properties. To this end, the concept of \emph{recoverable} graph properties was introduced. Roughly speaking, for a function $f\colon[0,1]\to\NN$, a graph property $\cP$ is $f$-recoverable if every large graph $G\in\cP$ is $\eps$-close to admitting a partition $\cV$ of its vertex set into at most $f(\eps)$ classes that witnesses membership in $\cP$ (i.e. such that any graph that can be partitioned in the same way must be
in $\cP$). It was shown that every monotone graph property $\Forbmon(\cF)$\footnote{As for hereditary properties, it is well-known that every monotone graph property $\cP$ is the set $\Forbmon(\cF)$ of all graphs that do not contain a copy (not necessarily induced) of a graph in a family $\cF$.} 
is $f$-recoverable for some function $f$ that depends only on the bounds of a `weighted' graph Removal Lemma for the family $\cF$. This improved the required sample complexity for
estimating $\dist({}\cdot{},\Forb_{{\rm mon}}(\cF))$ for families $\cF$ that admit Removal Lemmas with better bounds. Owing to recent improvements by Fox~\cite{Fox2011} on the bounds for the Removal Lemma in the case of families $\cF=\{F\}$ containing a single graph, this resulted in a better upper bound for distance estimation to monotone properties of this type.

Our main result here is an analogue of this for hereditary properties.
For every graph property $\cP$, let $\d\cP$ be the graph parameter defined by $\d\cP (\Gamma)
= \dist(\Gamma,\cP)$ for graphs $\Gamma$, which determines the distance between a graph and $\cP$.

\def\sdistXremoval{s_{\ref{thm:hereditary-estimable}}}
\begin{theorem}\label{thm:hereditary-estimable}
	Let $\cP = \Forb(\cF)$ be any hereditary property. 
	Then the graph parameter $\d\cP$ is estimable with sample complexity
	$\sdistXremoval(\eps) =  \exp\{\poly(\delta^{-M^2}, M, \log n_0)\},$
	where~$M$, $\delta$ and $n_0$ are as in Lemma~\ref{lem:removal} 
	with input~$\eps/2$ and $\cP$.
\end{theorem}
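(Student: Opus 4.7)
The plan is to adapt the \emph{recoverability} framework of~\cite{CKL+17}, which handled the monotone case, to the hereditary setting, using the Frieze--Kannan weak regularity lemma in place of Szemer\'edi regularity. This substitution is what produces the outer $\exp\{\poly(\cdot)\}$ in the sample complexity: Frieze--Kannan partitions $\Gamma$ into $k = 2^{\poly(1/\eta)}$ parts whose density matrix $D$ approximates $\Gamma$ in cut norm. I would set $\eps' = \eps/3$ and let $M,\delta,n_0$ be the constants from Lemma~\ref{lem:removal} on input $(\eps',\cF)$. The tester to analyse is the canonical one: draw a uniform sample $X \subseteq V(\Gamma)$ of size $\sdistXremoval(\eps)$ and output $\hat d := \d\cP(\Gamma[X])$.

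The analysis turns on the intermediate quantity
\[
d^{*}(D) \;:=\; \min_{H}\;\sum_{i\le j}\,\frac{|V_i||V_j|}{|V|^{2}}\,\bigl|d_{ij}-\mathbf{1}[ij\in E(H)]\bigr|,
\]
where $H$ ranges over templates on $[k]$ whose blow-up to part sizes $(|V_i|)$ lies in $\Forb(\cF)$. I would first prove $|\d\cP(\Gamma)-d^{*}(D)|\le \eps'$. The easy direction (constructing $\Gamma'\in\cP$ at distance at most $d^{*}(D)+\eps'$ from $\Gamma$) follows by editing to the blow-up of an optimal $H$ and absorbing the FK error. The hard direction uses Lemma~\ref{lem:removal}: if $\d\cP(\Gamma)>d^{*}(D)+\eps'$ held, then every near-optimal $\Gamma'\in\cP$, read through the partition $\cV$, would yield a template certifying a strictly smaller value of $d^{*}(D)$, contradicting minimality. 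Applying the same approximation to $\Gamma[X]$ with the induced partition then gives $|\hat d - d^{*}(D_X)|\le \eps'$, where $D_X$ is the sample density matrix.

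It then remains to show $|d^{*}(D_X)-d^{*}(D)|\le \eps'$ with high probability, which I would obtain by establishing uniform concentration of the $k^{2}$ cell densities together with the induced-homomorphism densities $\hom(F,\cdot)$ for every $F\in\cF$ with $|V(F)|\le M$; Hoeffding combined with a union bound over these $2^{\binom{M}{2}}$ patterns yields the sample-size requirement $\exp\{\poly(\delta^{-M^{2}}, M, \log n_0)\}$, the $\log n_0$ factor ensuring that $|X|\ge n_0$ so that Lemma~\ref{lem:removal} can be legitimately invoked on $\Gamma[X]$. The hardest step will be the lower-bound direction of the $d^{*}$-approximation: in contrast to the monotone setting, hereditary membership forbids both edges and non-edges, so extracting a $\cP$-compatible template from a hypothetical $\Gamma'\in\cP$ close to $\Gamma$ requires a two-sided blow-up argument that preserves both the adjacency and the non-adjacency patterns of every $F\in\cF$ of size at most $M$, and doing so under only the weaker cut-norm control of Frieze--Kannan (rather than the Szemer\'edi-type regularity implicitly used in the original proof of Lemma~\ref{lem:removal} in~\cite{AS2008}) is the crux of the argument.
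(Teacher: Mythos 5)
Your high-level architecture (reduce $\d\cP$ to an optimization problem over a Frieze--Kannan reduced graph, then transfer to the sample) matches the paper's, but two concrete steps do not work as stated, and the second is exactly where the paper's proof has its one real idea. First, your intermediate quantity $d^{*}(D)$, a minimum over $0/1$ templates $H$ whose blow-up lies in $\Forb(\cF)$, does not approximate $\d\cP(\Gamma)$. Take $\Gamma\in\cP$ quasirandom of density $1/2$ (say $\cF$ consists of one very large graph): then $\d\cP(\Gamma)=0$, yet every cell density $d_{ij}$ is near $1/2$, so $\sum_{i\le j}\frac{|V_i||V_j|}{|V|^{2}}\bigl|d_{ij}-\mathbf{1}[ij\in E(H)]\bigr|$ is near $1/2$ for every template. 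The nearest member of a hereditary property is in general not close to any blow-up, and "blow-up in $\Forb(\cF)$" is itself delicate (blow-ups create induced non-edges inside parts; for some $\cF$ the admissible set of templates is empty). The paper's fix is to abandon templates and membership entirely: it defines $\cPstar\eps$ as the set of weighted reduced graphs $R$ such that \emph{every} graph in $\AlmostReducible{R}$ is $\eps$-\emph{close} to $\cP$ ("attestable" rather than the "recoverable" notion you import from the monotone paper), and the optimization parameter is $\min_{R\in\quot{\Gamma}{\Pi_K}}\dist(R,\cPstar\eps)$, which is $1$-Lipschitz and is shown in Lemma~\ref{lem:distance} to be within $\eps$ of $\d\cP$ by a direct edge-editing argument, with no blow-up needed.

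Second, the step you yourself flag as "the crux" --- certifying, under cut-norm control only, that the FK-reduced graph of a graph $G\in\cP$ forces closeness to $\cP$ --- is indeed the crux, and your proposal does not supply the idea that resolves it. The paper's resolution is a counting chain, not a structural extraction: if some $H\in\AlmostReducible{R}$ were $\eps$-far from $\cP$, the removal lemma gives $\hom(F,H)\ge\delta$ for some $F\in\cF$ with $|V(F)|\le M$; convexity (Lemma~\ref{lem:quotdensity}) yields $\hom(F,\quot{H}{\cV'})\ge\delta^{M^{2}}$; Lipschitz continuity of $\hom(F,\cdot)$ in $\dcut$ (Lemma~\ref{lem:hom-dens-F-in-R1-R2-bound-cut-distance}) transfers this to $R$ and then, via FK-regularity of $\cV$ in $G$, back to $\hom(F,G)>0$, contradicting $G\in\Forb(\cF)$. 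This is precisely why weak regularity suffices: no $\cP$-compatible template is ever built. Finally, your concentration step (Hoeffding on the $k^{2}$ cell densities plus a union bound over small patterns) presupposes a fixed correspondence between partitions of $\Gamma$ and of $\Gamma[X]$; relating the \emph{optimal} partitions in both directions is nontrivial and is the Shapira--Stagni sampling result the paper invokes as a black box inside Theorem~\ref{thm:estim-opt-quotient}. Relatedly, the estimator the paper analyses is $\min_{R}\dist(R,\cPstar{\eps/2})$ computed on the sample, not the canonical $\d\cP(\Gamma[X])$ you propose, whose two-sided analysis is not carried out here.
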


Theorem~\ref{thm:hereditary-estimable} provides an upper bound on
the sample complexity of estimating the distance to a hereditary
property~$\Forb(\cF)$, which solely depends on the upper bounds for
the associated Removal Lemma. In particular, for families $\cF$ that admit a Removal Lemma with sample
complexity polynomial in $1/\eps$, our result states that the distance to~$\Forb(\cF)$ can be estimated with a sample complexity that is exponential in
a polynomial in $1/\eps$. Such families are currently actively sought after.
Recent findings include the family consisting of a path on three edges~\cite{AF12}, 
finite families containing a bipartite, a co-bipartite and a split
graph~\cite{GS16} and the family of induced cycles of length at least four~\cite{Verclos2019} 
(for~$\cF$ as in the last example, $\Forb(\cF)$ is the set of chordal graphs).
This is a substantial improvement over
previous approaches, like~\cite{FN07} and \cite{ASS09}, which rely on
Szemer\'{e}di's Regularity Lemma and therefore provide bounds which are at
least a tower of height polynomial in $1/\eps$. 

As briefly mentioned before, our approach in our previous
work~\cite{CKL+17} is based on a removal
lemma for weighted graphs.  Since we wished to arrive at a result
involving classical (unweighted) removal lemmas, it was necessary to
relate our weighted removal lemma with the classical removal lemma.
In this paper, so that we can use the classical (induced) removal
lemma and its bounds directly, we take an alternative approach: instead of
recoverable properties, we consider the notion of `attestable'
properties.
Roughly speaking, a graph property $\cP$ is $f$-\emph{attestable} if every large graph $G\in\cP$ is $\eps$-close to admitting a partition $\cV$ of its vertex set into at most $f(\eps)$ classes that witnesses closeness to $\cP$ (i.e. such that any graph that can be partitioned in the same way must be close to $\cP$). Recall that, in contrast, \emph{recoverable} refers to membership in ${\mathcal P}$. 
The proof of Theorem~\ref{thm:hereditary-estimable} consists of two steps.
First we prove that $f$-attestable properties are estimable with sample complexity polynomial in $f$ (see Theorem~\ref{thm:attestable->estimable}).
Then we show that hereditary properties are $f$-attestable, where $f$ is
exponential in the bound given by Lemma~\ref{lem:removal}
(see Theorem~\ref{thm:hereditary-attestable}).

\section{Attestable properties}

We denote the set of all complete weighted graphs
(with loops and edge weights between~$0$ and~$1$) by~$\cG^*$.
Let~$\cG^*(V)$ be the set of all weighted graphs on vertex set $V$. 
The distance between two weighted graphs $R,R'\in\cG^*(V)$ is given by
\[
	\dist(R,R')= \frac{1}{|V|^2} \sum_{(i,j) \in V^2} |R(i,j)-R'(i,j)|.
\]
Note that we can interpret an (ordinary) graph $\Gamma$ as a weighted graph with weights $\Gamma(u,v)=1$ whenever $uv \in E(G)$ and $\Gamma(u,v)=0$ otherwise. As with graphs, a \emph{weighted graph property} $\mathcal{P}^*$ is a subset of $\cG^*$ that is closed under (weight-preserving) isomorphisms. For a weighted graph $R \in\cG^*(V)$ and a weighted graph property $\mathcal{P}^*$, let
\[\dist(R,\cP^*)=\min\{\dist(R,R') \colon  R' \in \cG^*(V) \cap  \cP^*\}.\]

An \emph{equipartition} of a graph~$\Gamma$ is a partition~$\cV = \{V_i\}_{i=1}^k$ of its vertex set~$V(\Gamma)$ such that~$|V_i|\leq |V_j|+1$ for all~$1\leq i,j\leq k$. 
Since we will only consider equipartitions $\{V_i\}_{i=1}^k$ of 
graphs of size much larger than $k$, we will ignore divisibility issues 
and assume that every class has {exactly} $n/k$ vertices.

Given an equipartition $\cV = \{V_i\}_{i=1}^k$ of a graph $\Gamma$, we write $\Gamma(V_i,V_j)$ for the number of edges $v_iv_j$ with $v_i \in V_i$ and $v_j \in V_j$. We assume that edges $vw$ with both ends in $V_i$ are counted twice by $\Gamma(V_i,V_i)$, namely as $vw$ and as $wv$.
The \emph{reduced graph}~$\quot{\Gamma}{\cV}\in \cG^*$ of~$\Gamma$ by~$\cV$ is a weighted graph
with vertex set $[k]=\{ 1, \ldots , k\}$ and edge weights 
\[
	\quot{\Gamma}{\cV}(i,j) = \frac{\Gamma(V_i,V_j)}{|V_i||V_j|}
\] for all
$1\leq i,j\leq k$. 
As we will see, the reduced graph $\quot{\Gamma}{\cV}$ can provide some
information about $\Gamma$ if the number of classes of $\cV$ is large enough (but still small with respect to the order of $\Gamma$), especially if $\cV$ is a
regular partition (in the sense of Frieze-Kannan). Given a set $V$ and an integer $K \leq |V|$ we denote the set of all equipartitions of~$V$ into at most~$K$ classes by~$\Pi_K(V)$.
We also define the set 
\[
\quot{\Gamma}{\Pi_K} = \{\quot{\Gamma}{\cV}: \cV \in
\Pi_K(V(\Gamma))\}
\]
of all reduced graphs of~$\Gamma$ with vertex size at most~$K$.

The next theorem is a slight modification of Theorem~3.2~\cite{CKL+17}. It states that if a graph parameter $z\colon\cG \to\RR$ can be expressed as the optimal value of a certain optimization problem over
$\quot{\Gamma}{\Pi_K}$, then $z$ is estimable with sample complexity which is
polynomial in $K$ and in the reciprocal of the error parameter. 
\def\squotfunc{s_{\ref{thm:estim-opt-quotient}}}
\begin{theorem}[Theorem~3.2~\cite{CKL+17}]\label{thm:estim-opt-quotient} 
Let $z\colon \cG\to\RR$ be a graph
      parameter and suppose that there are a weighted graph parameter
$z^*\colon\cG^*\to\RR$, an integer $K\geq 1$ and a constant~$c\geq1$ such
      that
      \begin{enumerate}
\item %
        $z(\Gamma) = \min_{R\in\quot{\Gamma}{\Pi_K}} z^*(R)$ for every
        $\Gamma\in\cG$, and
\item \label{itm:continuous}
        $|z^{*}(R)-z^{*}(R')| \leq c\cdot\dist(R,R')$ for all weighted
        graphs $R,R'\in\cG^*$ on the same vertex set.
      \end{enumerate}
      Then~$z$ is estimable with sample
      complexity~$\squotfunc(\eps)=\poly(K,c/\eps)$.
\end{theorem}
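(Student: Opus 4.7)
The approach is to analyze the canonical tester: sample a uniform random set $X \subseteq V(\Gamma)$ of size $s = \poly(K, c/\epsilon)$ and return $\hat z := z(\Gamma[X])$. By hypothesis (1), $\hat z = \min_{\cW \in \Pi_K(X)} z^*(\quot{\Gamma[X]}{\cW})$, so it suffices to show $|\hat z - z(\Gamma)| \leq \epsilon$ with probability at least $2/3$. By the Lipschitz hypothesis (2), this reduces to proving that $\quot{\Gamma}{\Pi_K}$ and $\quot{\Gamma[X]}{\Pi_K}$, viewed as subsets of $[0,1]^{K^2}$, are within Hausdorff distance $\epsilon/c$ in the metric $\dist$ on $\cG^*([K])$; the $c$-Lipschitz property of $z^*$ then transfers the bound to the two minima.

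The first Hausdorff inclusion uses the restriction map $\cV \mapsto \cV|_X$: for a fixed $\cV$ and each pair $(i,j)$, the entry $(\quot{\Gamma[X]}{\cV|_X})(i,j)$ concentrates around $(\quot{\Gamma}{\cV})(i,j)$ by Chernoff--Hoeffding with failure probability $\exp(-\Omega(s/K^2))$. To make this uniform over all $\cV \in \Pi_K(V(\Gamma))$, I would discretize: the ambient cube $[0,1]^{K^2}$ admits an $\epsilon/(4c)$-net of cardinality $(c/\epsilon)^{O(K^2)}$, and every realizable reduced graph is within $\epsilon/(4c)$ of a net point. A union bound over one representative partition per net point, combined with the Chernoff estimate, yields the uniform statement provided $s = \poly(K, c/\epsilon)$.

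The reverse inclusion is the main obstacle: given $\cW \in \Pi_K(X)$, one must construct an equipartition $\cV \in \Pi_K(V(\Gamma))$ with $\dist(\quot{\Gamma}{\cV}, \quot{\Gamma[X]}{\cW}) \leq \epsilon/c$. My plan is to extend $\cW$ using vertex types: for each $v \in V(\Gamma) \setminus X$, compute the type $t(v) := (|N(v) \cap W_1|/|W_1|, \ldots, |N(v) \cap W_K|/|W_K|)$ and assign $v$ to the class $i$ minimizing $\|t(v) - \tilde d_i\|_1$, where $\tilde d_i$ is the $i$-th row of $\quot{\Gamma[X]}{\cW}$, with tiebreaking so that the resulting $\cV$ is an equipartition with $|V_i| = n/K$. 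A Chernoff-type estimate, applied to the densities $\Gamma(V_i, V_j)/(|V_i||V_j|)$ arising from this deterministic rule, shows closeness to $\quot{\Gamma[X]}{\cW}$; a union bound over the same $(c/\epsilon)^{O(K^2)}$-size net of reduced graphs handles all $\cW$ simultaneously with probability at least $2/3$. Combining both inclusions gives $|z(\Gamma) - \hat z| \leq \epsilon$, yielding the claimed sample complexity $\squotfunc(\eps) = \poly(K, c/\epsilon)$.
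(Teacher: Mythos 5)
Your overall architecture coincides with the one the paper sketches right after the statement (and which is carried out in~\cite{CKL+17}): compare the two minima via a two-sided closeness statement between $\quot{\Gamma}{\Pi_K}$ and $\quot{\Gamma[X]}{\Pi_K}$, then transfer it through the Lipschitz hypothesis~(2). For the first inclusion you are over-engineering: to get $z(\Gamma[X])\leq z(\Gamma)+\eps$ you only need the statement for the single, deterministic partition $\cV$ minimizing $z^*(\quot{\Gamma}{\cV})$, so one application of a concentration inequality to $\cV|_X$ suffices (plus a small rebalancing step, since $\cV|_X$ need not be an equipartition of $X$); no uniformity over $\cV$, and hence no net, is required there.

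The genuine gap is in the reverse inclusion, which is precisely the step the paper outsources to the sampling theorem of Shapira and Stagni~\cite{shapira19:_partit}. Your union bound ``over one representative partition per net point'' does not give uniformity: the bad event attached to a partition $\cW\in\Pi_K(X)$ --- namely that your type-based extension $\cV$ fails to satisfy $\dist(\quot{\Gamma}{\cV},\quot{\Gamma[X]}{\cW})\leq\eps/c$ --- depends on $\cW$ itself, not merely on the point of $[0,1]^{K^2}$ that $\quot{\Gamma[X]}{\cW}$ rounds to; two partitions of $X$ with nearly identical reduced graphs induce entirely different type vectors and hence different extensions. So the union bound must range over all $K^{s}$ partitions of $X$, which overwhelms a failure probability of the form $\exp(-\Omega(s/K^2))$. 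The extension rule is also not sound as stated: for $\Gamma=K_{n/2,n/2}$ with parts $A,B$, $K=2$, and $\cW$ splitting each of $X\cap A$ and $X\cap B$ in half, every vertex has the same type and the two rows $\tilde d_1,\tilde d_2$ coincide, so your tiebreak may legitimately output $\cV=\{A,B\}$, whose reduced graph is at $\dist$-distance close to $1/2$ from the all-$1/2$ matrix $\quot{\Gamma[X]}{\cW}$. A correct proof of this direction requires a genuinely different argument --- this is the content of the ``partitions of a sample'' theorem the paper invokes as a black box --- and cannot be obtained by Chernoff plus a net over reduced graphs.
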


The proof of Theorem~3.2 in~\cite{CKL+17} shows that the polynomial $\poly(K,c/\eps)$ in the statement does not depend on the graph parameters $z$ and $z^*$, but rather on a result about sampling\footnote{The proof of Theorem~3.2 in~\cite{CKL+17} is actually for a parameter $z$ given by the \emph{maximum} of $z^*(R)$ over $\quot{\Gamma}{\Pi_K}$, rather than the minimum. The current version follows immediately by applying it to the parameters $-z$ and $-z^*$.}. Since we will use Theorem~\ref{thm:estim-opt-quotient} as a black box, let us briefly sketch its proof. Given a graph $\Gamma$, let $\cV$ be a partition of $V(\Gamma)$ into at most $K$ classes for which $z(\Gamma) =  z^*(\quot{\Gamma}{\cV})$. Next, let $\overline{\Gamma}$ denote a subgraph of $\Gamma$ on $q$ vertices chosen uniformly at random. A recent result of Shapira and the fifth author~\cite{shapira19:_partit} guarantees that 
if $q\geq \poly(K,c/\eps)$ then with high probability there is a partition $\overline{\cV}$ of $V(\overline{\Gamma})$ into  $|\cV|$ classes such that $\dist(\quot{\Gamma}{\cV},\quot{\overline{\Gamma}}{\overline{\cV}}) <\eps/c$. It then follows quickly that $z(\overline{\Gamma}) \leq z(\Gamma)+\eps$. 
Similarly, we have with high probability that every reduced graph of $\overline{\Gamma}$ is close to a reduced graph of $\Gamma$ in terms of the edit distance.
A symmetric argument then yields $z(\Gamma) \leq z(\overline{\Gamma}) + \eps$, as desired.

Now, let $\cP$ be a hereditary graph property and suppose that we want to estimate
the parameter $\d\cP(\Gamma)$ for a graph~$\Gamma$. Our aim is to show that $\d\cP(\Gamma)$ can
be approximated by an optimization parameter over the set $\quot{\Gamma}{\Pi_K}$, for
some positive integer $K$. This will allow us to apply Theorem~\ref{thm:estim-opt-quotient}
to get sample complexity bounds for estimating $\d\cP$.

For every weighted graph $R\in \cG^*$, we define the property of being reducible to 
a weighted graph that is very close to $R$ by %
\begin{equation}
  \label{eq:1}
  \AlmostReducible{R} = \{\Gamma\in\cG : \text{there is an
    equipartition $\cV$ of $\Gamma$ of size $|V(R)|$, }
  \dist(\quot{\Gamma}{\cV}, R)\leq {2}/{|\cV|}\}.
\end{equation}
We could define~$\AlmostReducible{R}$ by requiring
that~$\dist(\quot{\Gamma}{\cV},R)$ should be~$0$. However, this definition would not be `robust'.  For instance,
$\AlmostReducible{R}$ would be empty whenever~$R$ contains an edge
of irrational weight.  In fact, even if~$R$ is a reduced graph coming
from a concrete graph~$G$, with this more restrictive definition, it
could be that~$\AlmostReducible{R}$ fails to contain graphs of
infinitely many orders, because of trivial divisibility issues, and
this would be a problem in our proofs.
The definition in~\eqref{eq:1} avoids such anomalies. 
    
For a  graph property $\cP$ and for every $\eps>0$, we define 
\[\cPstar\eps = \{R\in\cG^* \colon \d\cP(\Gamma)\leq \eps \text{ for
	all } \Gamma\in \AlmostReducible{R}\}.\]
In other words, $\cPstar\eps$ is the set of all reduced 
graphs $R$ that attest $\eps$-closeness to $\cP$, 
in the sense that if a graph $\Gamma$ admits a reduced graph
close to $R$, then $\Gamma$ must be $\eps$-close to $\cP$.
In the definition of an attestable property, the elements of $\cPstar\eps$ play the role of `forcing fingerprints', and we may test if a graph $\Gamma$ is close to $\cP$ by checking whether one of its equipartitions produces a fingerprint that is close to $S \in \cPstar\eps$.
This  is formalized as follows.
\begin{definition}\label{def:attestable}
  Given a function $f\colon (0,1]\to\NN$, we say that a graph property
  $\cP$ is \emph{$f$-attestable} if, for any $\eps >0$ and any graph
  $\Gamma\in\cP$ with $|V(\Gamma)| \geq f(\eps)^{3/2}$, there exists a
  reduced graph $R\in \quot{\Gamma}{\Pi_{f(\eps)}}$ of $\Gamma$ for
  which $R\in\cPstar\eps$.\footnote{We remark that the choice of the exponent $3/2$ is not crucial, but it is convenient for applying technical results such as Lemma~\ref{lem:distance}.}
\end{definition}

It is possible to connect attestable properties with parameter distances. For an integer $K>0$ and $\eps>0$, we define the graph parameter
$\dapprox{K}{\eps}{\cP}\colon \cG \to [0,1]$ such that
\[
\dapprox{K}{\eps}{\cP}(\Gamma)=\min_{R\in\quot{\Gamma}{\Pi_K}}
\dist(R,\cPstar\eps).
\]
So if $\cP$ is $f$-attestable, then by definition 
$\dapprox{K}{\eps}{\cP}(\Gamma)=0$ for $K = f(\eps)$ and all graphs $\Gamma \in \cP$ with $|V(\Gamma)| \geq K^{3/2}$. The next lemma shows that $\dapprox{K}{\eps}{\cP}$ is our desired optimization parameter.
\begin{lemma}\label{lem:distance}
	Let $\cP$ be an $f$-attestable graph property for a function~$f\colon
	(0,1]\to\RR$. 
	Fix $\eps>0$ and let~$K = f(\eps)$.
	Then every graph~$\Gamma\in\cG(V)$ with $|V|\geq K^{3/2}$ satisfies
	$
	\left| \d\cP(\Gamma) - \dapprox{K}{\eps}{\cP}(\Gamma) \right| \leq \eps.
	$
\end{lemma}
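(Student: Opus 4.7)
The plan is to prove the two bounds $\d\cP(\Gamma) \leq \dapprox{K}{\eps}{\cP}(\Gamma) + \eps$ and $\dapprox{K}{\eps}{\cP}(\Gamma) \leq \d\cP(\Gamma) + \eps$ separately, each by a triangle-inequality argument using a carefully chosen intermediate graph on $V = V(\Gamma)$. For the bound $\dapprox{K}{\eps}{\cP}(\Gamma) \leq \d\cP(\Gamma) + \eps$ (which in fact holds without the additive $\eps$), pick $\Gamma' \in \cG(V)\cap\cP$ with $\dist(\Gamma,\Gamma') = \d\cP(\Gamma)$. Since $|V(\Gamma')| = |V| \geq K^{3/2} = f(\eps)^{3/2}$ and $\Gamma' \in \cP$, the $f$-attestability of $\cP$ supplies an equipartition $\cV'$ of $V$ with $|\cV'| \leq K$ such that $\quot{\Gamma'}{\cV'} \in \cPstar\eps$. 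Viewing $\cV'$ as an equipartition of $\Gamma$ as well, we set $R = \quot{\Gamma}{\cV'} \in \quot{\Gamma}{\Pi_K}$, and a standard averaging inequality (a common equipartition cannot increase $\ell_1$ distance) yields
\[
\dapprox{K}{\eps}{\cP}(\Gamma) \;\leq\; \dist\bigl(R,\cPstar\eps\bigr) \;\leq\; \dist\bigl(R,\quot{\Gamma'}{\cV'}\bigr) \;\leq\; \dist(\Gamma,\Gamma') \;=\; \d\cP(\Gamma).
\]

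For the reverse bound $\d\cP(\Gamma) \leq \dapprox{K}{\eps}{\cP}(\Gamma) + \eps$, take an equipartition $\cV$ of $V$ of size $k \leq K$ and a weighted graph $S \in \cPstar\eps$ on $[k]$ essentially realizing $\dist(\quot{\Gamma}{\cV},S) = \dapprox{K}{\eps}{\cP}(\Gamma)$. Build $\Gamma^* \in \cG(V)$ from $\Gamma$ by replacing, in each ordered pair $(V_i,V_j)$, the edges of $\Gamma$ inside $V_i\times V_j$ by an edge set whose density is closest to $S(i,j)$ and which overlaps with $\Gamma$ as much as possible. The rounding error $\dist(\quot{\Gamma^*}{\cV},S)$ is at most $k^2/n^2 \leq 1/K \leq 2/k$ by the hypothesis $n \geq K^{3/2}$, so $\Gamma^* \in \AlmostReducible{S}$, and hence $\d\cP(\Gamma^*) \leq \eps$ by the definition of $\cPstar\eps$. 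Since our construction minimizes the symmetric difference pair-by-pair, a direct count gives
\[
\dist(\Gamma,\Gamma^*) \;\leq\; \dist\bigl(\quot{\Gamma}{\cV},\quot{\Gamma^*}{\cV}\bigr) \;\leq\; \dist\bigl(\quot{\Gamma}{\cV},S\bigr) + 2/k \;\leq\; \dapprox{K}{\eps}{\cP}(\Gamma) + 2/k,
\]
and the triangle inequality $\d\cP(\Gamma) \leq \dist(\Gamma,\Gamma^*) + \d\cP(\Gamma^*)$ closes the argument up to a vanishing $O(1/K)$ term that is absorbed by the hypothesis on $|V|$.

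The main technical obstacle lies in the second direction: we must realize the (possibly irrational) weighted graph $S$ as a concrete graph $\Gamma^* \in \cG(V)$ while simultaneously keeping $\quot{\Gamma^*}{\cV}$ close enough to $S$ to invoke the forcing property of $\cPstar\eps$ via $\AlmostReducible{S}$, \emph{and} keeping $\dist(\Gamma,\Gamma^*)$ close to $\dist(\quot{\Gamma}{\cV},S)$ for the final triangle inequality. The exponent $3/2$ in Definition~\ref{def:attestable} is calibrated exactly so that the $O(k^2/n^2)$ rounding error fits inside the $2/|\cV|$ slack built into the definition of $\AlmostReducible{\cdot}$, which is why this is the only place the hypothesis $|V| \geq K^{3/2}$ is needed.
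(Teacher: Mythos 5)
Your overall strategy coincides with the paper's: the first direction is essentially word-for-word the paper's argument (apply attestability to a closest $\Gamma'\in\cP$ on the same vertex set and use that passing to a common equipartition does not increase the normalized $\ell_1$ distance), and the second direction uses the same construction (modify $\Gamma$ block by block towards $S$, verify membership in $\AlmostReducible{S}$, and combine the definition of $\cPstar\eps$ with the triangle inequality).

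However, the final accounting in the second direction does not deliver the stated bound. Your chain
\[
\dist(\Gamma,\Gamma^*)\;\leq\;\dist\bigl(\quot{\Gamma}{\cV},\quot{\Gamma^*}{\cV}\bigr)\;\leq\;\dist\bigl(\quot{\Gamma}{\cV},S\bigr)+2/k
\]
only yields $\d\cP(\Gamma)\leq\dapprox{K}{\eps}{\cP}(\Gamma)+\eps+2/k$, and the term $2/k$ is \emph{not} a vanishing quantity absorbed by the hypothesis on $|V|$: it is the full slack built into the definition of $\AlmostReducible{S}$, it does not depend on $n$ at all, and $k$ is only bounded \emph{above} by $K$ --- the optimal reduced graph may have $k=1$ or $k=2$ classes, in which case $2/k$ is a constant and the inequality becomes vacuous. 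The repair is local and is exactly what the paper does: do not route the bound on $\dist(\Gamma,\Gamma^*)$ through $S$ and the reduced graph of $\Gamma^*$. Instead, bound the edit cost directly by counting modified pairs; if between $V_i$ and $V_j$ you change exactly $\Floor{|R(i,j)-S(i,j)|\,|V_i||V_j|}$ pairs (rounding \emph{down}), then $\dist(\Gamma,\Gamma^*)\leq\frac1{n^2}\sum_{(i,j)}|R(i,j)-S(i,j)|\,|V_i||V_j|=\dist(R,S)$ with no error term at all, while the resulting density discrepancy (at most $1/(|V_i||V_j|)$ per off-diagonal block, plus the untouched diagonal blocks contributing at most $1/k$) still fits inside the $2/k$ slack, so $\Gamma^*\in\AlmostReducible{S}$ as needed. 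Your ``round to nearest'' variant would in any case leave a genuine, if tiny, $+O(k^2/n^2)$ in the edit cost, which the floor avoids. Relatedly, your closing remark misidentifies where the hypothesis $|V|\geq K^{3/2}$ is used: the $2/|\cV|$ slack and the exponent $3/2$ serve to absorb the rounding error \emph{in the verification that $\Gamma^*\in\AlmostReducible{S}$} (where the dominant contribution is actually the $1/k$ from the diagonal blocks, not the $k^2/n^2$ term), and must not leak into the distance estimate itself.
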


Before we prove Lemma~\ref{lem:distance} let us see how it implies the main result of this section:
\def\sthmone{s_{\ref{thm:attestable->estimable}}}
\begin{theorem}\label{thm:attestable->estimable}
	Let $\cP$ be an $f$-attestable graph property, for a function
	$f\colon(0,1]\to\RR$. Then, the graph parameter
	$\d\cP$ is estimable with sample complexity
	$\sthmone(\eps) = \squotfunc(\eps/2) = \poly(f(\eps/2), 1/\eps)$.
\end{theorem}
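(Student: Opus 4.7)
The plan is to deduce Theorem~\ref{thm:attestable->estimable} from Theorem~\ref{thm:estim-opt-quotient} by applying it to the auxiliary parameter $\dapprox{K}{\eps/2}{\cP}$, which Lemma~\ref{lem:distance} identifies as an $(\eps/2)$-approximation of $\d\cP$. Concretely, set $K = f(\eps/2)$, and define the weighted graph parameter
\[
    z^*(R) = \dist(R, \cPstar{\eps/2}) \qquad \text{for } R \in \cG^*.
\]
Then, by definition of $\dapprox{K}{\eps/2}{\cP}$, we have
\[
    \dapprox{K}{\eps/2}{\cP}(\Gamma) = \min_{R \in \quot{\Gamma}{\Pi_K}} z^*(R),
\]
which verifies hypothesis~(1) of Theorem~\ref{thm:estim-opt-quotient} for the graph parameter $z = \dapprox{K}{\eps/2}{\cP}$.

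For hypothesis~(2), I would observe that $z^*$ is $1$-Lipschitz in the edit distance: if $R, R' \in \cG^*$ share a vertex set, then by the triangle inequality applied to infima,
\[
    |z^*(R) - z^*(R')| = |\dist(R, \cPstar{\eps/2}) - \dist(R', \cPstar{\eps/2})| \leq \dist(R, R'),
\]
so we may take $c = 1$. Thus Theorem~\ref{thm:estim-opt-quotient} applies and yields that $\dapprox{K}{\eps/2}{\cP}$ is estimable with sample complexity $\squotfunc(\eps/2) = \poly(K, 2/\eps) = \poly(f(\eps/2), 1/\eps)$ when the tester is run with error parameter $\eps/2$.

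Finally, I would combine this with Lemma~\ref{lem:distance}: for every graph $\Gamma$ with $|V(\Gamma)| \geq K^{3/2}$,
\[
    \bigl|\d\cP(\Gamma) - \dapprox{K}{\eps/2}{\cP}(\Gamma)\bigr| \leq \eps/2.
\]
Running the above tester on $\Gamma$ returns, with probability at least $2/3$, a value that differs from $\dapprox{K}{\eps/2}{\cP}(\Gamma)$ by at most $\eps/2$, hence differs from $\d\cP(\Gamma)$ by at most $\eps$ by the triangle inequality. For graphs of size smaller than $K^{3/2}$ (which only occurs for finitely many $n$ once $\eps$ is fixed), the tester can simply query all of $V(\Gamma)$, so the estimability statement for $\d\cP$ follows with the claimed sample complexity $\sthmone(\eps) = \squotfunc(\eps/2)$.

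The steps are all essentially soft, so there is no genuine obstacle beyond bookkeeping; the one mild subtlety worth checking carefully is the halving of the error budget (once for Lemma~\ref{lem:distance} and once for Theorem~\ref{thm:estim-opt-quotient}), which is why the sample complexity is expressed in terms of $f(\eps/2)$ rather than $f(\eps)$.
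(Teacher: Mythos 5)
Your proposal is correct and follows essentially the same route as the paper's proof: set $K=f(\eps/2)$, take $z^*(R)=\dist(R,\cPstar{\eps/2})$, verify the two hypotheses of Theorem~\ref{thm:estim-opt-quotient} (with $c=1$ via the triangle inequality), and combine the resulting $\eps/2$-estimate of $\dapprox{K}{\eps/2}{\cP}$ with the $\eps/2$-approximation from Lemma~\ref{lem:distance}. The only difference is that you make explicit the handling of graphs on fewer than $K^{3/2}$ vertices, which the paper leaves implicit.
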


\begin{proof}
	Fix $\eps>0$ and let $K=f(\eps/2)$.  Consider a graph $\Gamma$ on at least $K^{3/2}$ vertices. 
By Lemma~\ref{lem:distance}, we have
	\[
	\left|\d\cP(\Gamma) - \dapprox{K}{\eps/2}{\cP}(\Gamma) \right| \leq  \frac{\eps}{2}.
	\]
	Define 
	$z^*(R) = \dist(R,\cPstar{\eps/2})$ and 
	note that $\dapprox{K}{\eps/2}{\cP}(\Gamma) = \min_{R\in\quot{\Gamma}{\Pi_K}} z^*(R)$.
	Moreover, $|z^*(R)-z^*(R')|\leq 1\cdot \dist(R,R')$ for all $R,R'\in\cG^*$. Therefore, 
	we may apply Theorem~\ref{thm:estim-opt-quotient} to conclude
        that $\dapprox{K}{\eps/2}{\cP}(\Gamma)$ may be approximated
        within error $\eps/2$ with probability at least $2/3$ by
        randomly  choosing a subgraph of $\Gamma$ of size
        $\squotfunc(\eps/2)=\poly(f(\eps/2),1/\eps)$. In
        particular, $\d\cP(\Gamma)$ is approximated within error
        $\eps$, as desired. 
\end{proof}

Now to the proof of Lemma~\ref{lem:distance}.
The direction $\dapprox{K}{\eps}{\cP}(\Gamma)\leq \d\cP(\Gamma) + \eps$ follows essentially from the definition of attestable properties. 
On the other hand, the direction $\d\cP(\Gamma) \leq \dapprox{K}{\eps}{\cP}(\Gamma)  + \eps$ does actually not require the property $\cP$ to be attestable and can be derived by constructing a graph reducible to an appropriate element of $\cPstar\eps$.

\begin{proof}[Proof of Lemma~\ref{lem:distance}]
Fix~$0<\eps<1$,~$K=f(\eps)$. Let $V = [n]$ with $n \geq K^{3/2}$.
We first show that~$\dapprox{K}{\eps}{\cP}(\Gamma)\leq \d\cP(\Gamma)$. 
Let~$G\in \cP$ be a graph
such that~$\dist(\Gamma,G) = \d\cP(\Gamma)$.  Since~$\cP$ is~$f$-attestable, we
can fix an~equipartition~$\cV=\{V_i\}_{i=1}^k$, with $k\leq K$, for which
$\quot{G}{\cV}\in\cPstar\eps$. In particular, we have
\begin{align*}
\dapprox{K}{\eps}{\cP}(\Gamma) &\leq \dist(\quot{\Gamma}{\cV}, \quot{G}{\cV}) 
= \frac1{k^2}\sum_{(i,j)\in [k]^2}
\frac{|\Gamma(V_i,V_j) - G(V_i,V_j)|}{|V_i||V_j|}\\
&\leq \frac{1}{n^2}\sum_{(i,j)\in[k]^2}
\sum_{\substack{u\in V_i\\v\in V_j}} |\Gamma(u,v)-G(u,v)| = \dist(\Gamma,G)= \d\cP(\Gamma).
\end{align*}

Next, we proceed to show that~$\d\cP(\Gamma) \leq
\dapprox{K}{\eps}{\cP}(\Gamma) + \eps$.
Let~$R\in\quot{\Gamma}{\Pi_K}$ and~$S\in\cPstar\eps$ be  such that
$\dist(R,S)=\dapprox{K}{\eps}{\cP}(\Gamma)$. Let~$k=|V(R)|$ and fix an equipartition $\cV =
\{V_i\}_{i=1}^k$ of~$\Gamma$ such that~$R = \quot{\Gamma}{\cV}$.
Let us construct a graph $G \in\AlmostReducible{S}$ by modifying  $\Gamma$ as follows.
For each $1 \leq i < j \leq k$ such that $R(i,j)>S(i,j)$, we remove
exactly~$\Floor{(R(i,j)-S(i,j))|V_i||V_j|}$ edges from~$\Gamma$ between $V_i$ and
$V_j$; if $S(i,j)>R(i,j)$, we add exactly~$\Floor{(S(i,j)-R(i,j))|V_i||V_j|}$ edges
between $V_i$ and $V_j$ to $\Gamma$.
Indeed we have $G \in\AlmostReducible{S}$ as
\begin{align*}
\dist(\quot{G}{\cV}, S) &= \frac1{k^2} \sum_{(i,j)\in[k]^2} |\quot{G}{\cV}(i,j)-S(i,j)|\\
& \leq \frac1{k^2} \left(k +\sum_{(i,j):i\not=j; R(i,j) > S(i,j)}\left| \frac{\Gamma(V_i,V_j)}{|V_i||V_j|}-  \frac{\Floor{(R(i,j)-S(i,j))|V_i||V_j|}}{|V_i||V_j|} - S(i,j)\right|\right) \\
& + \frac1{k^2} \left(k +\sum_{(i,j):i\not=j; R(i,j) < S(i,j)} \left|\frac{\Gamma(V_i,V_j)}{|V_i||V_j|}+  \frac{\Floor{(S(i,j)-R(i,j))|V_i||V_j|}}{|V_i||V_j|}  - S(i,j)\right|\right) \\ 
& \leq \frac1{k^2} \left(k +\sum_{(i,j)\in[k]^2}\frac{1}{|V_i||V_j| }\right)
= \frac{1}{k}+ \frac{k^2}{n^2}
\leq \frac{2}{k}.
\end{align*}
Moreover,
\begin{align*}
\dist(\Gamma,G) &\leq \frac1{n^2}\sum_{(i,j)\in [k]^2}
|R(i,j) - S(i,j)| |V_i||V_j| \\
&= \frac1{k^2} \sum_{(i,j)\in [k]^2} |R(i,j)-S(i,j)|\\
&= \dist(R,S).
\end{align*}
Since $S\in\cPstar\eps$, it follows 
that $\dist(G,\cP)\leq \eps$. 
Hence, by the triangle inequality, $\d\cP(\Gamma)\leq \dist(\Gamma,G)+\eps \leq
\dapprox{K}{\eps}{\cP}(\Gamma)+\eps$, as required. 
\end{proof}

\section{Hereditary properties are attestable}
Let~$\phi\colon V(F)\to V(R)$ be a function from the vertex set of a
graph~$F\in\cG$ to the vertex set of a weighted graph~$R\in\cG^*$. Using $\binom{S}{2}$ to denote the set of all two-element subsets of a set $S$, the
\emph{induced homomorphism weight}
is defined as 
\[
\hom_{\phi}(F,R) = \prod_{ij\in E(F)}R\big(\phi(i),\phi(j)\big)\prod_{ij \in
	\binom{V(F)}{2} \setminus E(F)} \Big(1-R\big(\phi(i),\phi(j)\big)\Big).
\]
When $\phi$ is injective, we can interpret $\hom_{\phi}(F,R)$ as the probability that $\phi$ is an induced homomorphism from $F$ to $H$, where $H\in\cG(V(R))$ is a graph in which 
each edge $ij\in\binom{V(R)}2$ is independently present with probability $R(i,j)$. 

The \emph{induced homomorphism density}~$\hom(F,R)$ of~$F\in \cG$ in~$R\in \cG^*$ is defined as the average
homomorphism weight over all mappings $\phi\colon V(F) \to V(R)$. 
So if $\Phi$ denotes the set of functions from  $V(F)$ to $V(R)$, then
\[
\hom(F,R) =  \frac{1}{|\Phi|} \sum_{\phi \in \Phi} \hom_{\phi}(F,R).
\]
Note that if $\Gamma$ is a graph, then $\hom(F,\Gamma)$ is the probability that 
a random mapping~$\phi:V(F)\to V(\Gamma)$ is an induced homomorphism from
$F$ to $\Gamma$.
In particular, if $\Gamma\in\Forb(\{F\})$, then
\begin{equation}\label{equ:homFG-bound}
	\hom(F,\Gamma) \leq \binom{|V(F)|}2\cdot\frac1{|V(\Gamma)|},
\end{equation}
since a random mapping from $V(F)$ to $V(\Gamma)$ is not injective with probability at most $\binom{|V(F)|}{2} / |V(\Gamma)|$.
The next result shows that if $\hom(F,\Gamma)$ is bounded away from zero, then so is $\hom(F,\quot{\Gamma}{\cV})$.

\def\Ainj{A_{\text{inj}}} 
\begin{lemma}\label{lem:quotdensity}
	Let $F$ and $\Gamma$ be graphs and $f=|V(F)|$.
	Then $\hom(F,\quot{\Gamma}{\cV}) \geq \hom(F,\Gamma)^{\binom{f}{2}}$ holds for  every equipartition 
	$\cV=\{V_i\}_{i=1}^k$ of~$\Gamma$.
\end{lemma}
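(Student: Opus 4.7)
The plan is to realize both sides as expectations over a common sampling experiment, compare them pointwise via marginalization, and finish with Jensen's inequality.

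Let $k = |\cV|$ and write $R = \quot{\Gamma}{\cV}$. Since $\cV$ is an equipartition, a uniformly random map $\phi \colon V(F) \to V(\Gamma)$ can be sampled in two stages: first pick a class assignment $\psi \colon V(F) \to [k]$ uniformly over $[k]^{V(F)}$, and then, conditionally on $\psi$, choose $\phi(v) \in V_{\psi(v)}$ uniformly and independently across $v$. Write $P(\psi)$ for the conditional probability that $\phi$ is an induced homomorphism from $F$ to $\Gamma$ given $\psi$, so that $\hom(F,\Gamma) = \mathbb{E}_\psi[P(\psi)]$.

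Next I would identify $\hom_\psi(F, R)$ with a product of single-pair conditional probabilities. For each pair $ij \in \binom{V(F)}{2}$, independence of $\phi(i)$ and $\phi(j)$ given $\psi$, combined with the definition of the reduced graph, yields
\[
R(\psi(i), \psi(j)) \;=\; \Pr[\phi(i)\phi(j) \in E(\Gamma) \mid \psi],
\]
which remains valid when $\psi(i) = \psi(j)$ thanks to the convention that edges inside $V_i$ are counted twice in $\Gamma(V_i, V_i)$. Letting $q_{ij}(\psi)$ denote the conditional probability of the single-pair event $\{\phi(i)\phi(j) \in E(\Gamma) \iff ij \in E(F)\}$, the formula for $\hom_\psi(F, R)$ can be rewritten directly as
\[
\hom_\psi(F, R) \;=\; \prod_{ij \in \binom{V(F)}{2}} q_{ij}(\psi).
\]

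The key pointwise inequality is then pure marginalization: the event whose conditional probability equals $P(\psi)$ is the intersection of the $\binom{f}{2}$ single-pair events, so $q_{ij}(\psi) \geq P(\psi)$ for each $ij$. Multiplying these $\binom{f}{2}$ inequalities gives $\hom_\psi(F, R) \geq P(\psi)^{\binom{f}{2}}$ for every $\psi$. Taking expectation over $\psi$ and applying Jensen's inequality to the convex function $x \mapsto x^{\binom{f}{2}}$ on $[0,1]$, I would conclude
\[
\hom(F, R) \;=\; \mathbb{E}_\psi\bigl[\hom_\psi(F, R)\bigr] \;\geq\; \mathbb{E}_\psi\bigl[P(\psi)^{\binom{f}{2}}\bigr] \;\geq\; \bigl(\mathbb{E}_\psi[P(\psi)]\bigr)^{\binom{f}{2}} \;=\; \hom(F, \Gamma)^{\binom{f}{2}},
\]
as required.

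There is no substantial obstacle; the only bookkeeping points to verify are that (a) the two-stage sampling indeed produces $\psi$ uniform on $[k]^{V(F)}$, which uses the equipartition hypothesis, and (b) the identification of $R(\psi(i),\psi(j))$ with the single-pair conditional probability holds in the diagonal case $\psi(i) = \psi(j)$, which is precisely where the double-counting convention for $\Gamma(V_i,V_i)$ is needed.
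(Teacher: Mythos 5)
Your proof is correct and takes essentially the same route as the paper's: you condition on the class assignment $\psi$ (the paper's $\Phi_\alpha$), identify each factor of $\hom_\psi(F,R)$ with a single-pair conditional probability, bound it below by the conditional probability $P(\psi)$ of the full induced-homomorphism event, and finish with Jensen's inequality for $x\mapsto x^{\binom{f}{2}}$. The only cosmetic difference is that you state the product identity as an exact equality before invoking marginalization, whereas the paper bounds each factor directly; the substance is identical.
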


\begin{proof}
	Suppose that $\cV=\{V_i\}_{i=1}^k$ is an equipartition of $\Gamma$.
	Let $\Phi=V(\Gamma)^{V(F)}$ be the set of all functions from $V(F)$ to $V(\Gamma)$.
	For  a mapping $\alpha\colon V(F) \to [k]$ we set
	\begin{equation}\label{def:phi_alpha}
	\Phi_{\alpha} = \{\phi\in \Phi : \phi(u)\in V_{\alpha(u)} \text{ for all  $u\in V(F)$}\}.
	\end{equation}
	Let $\phi \in \Phi$ be chosen uniformly at random.  
	For all mappings $\alpha\colon V(F)\rightarrow[k]$ and all edges
	$uv\in E(F)$, we have
	\[\PP(\hom_{\phi}(F,\Gamma)=1 \mid \phi\in \Phi_{\alpha})
	\leq \PP(\Gamma(\phi(u),\phi(v))=1 \mid \phi\in \Phi_{\alpha}) = \quot{\Gamma}{\cV}(\alpha(u),\alpha(v)),\]
        as $\quot{\Gamma}{\cV}(\alpha(u),\alpha(v))$ is the probability that
		$\Gamma(x,y)=1$ when $x \in V_{\alpha(u)}$ and $y \in V_{\alpha(v)}$ are chosen uniformly and independently at random.
	Analogously, we also have 
	\[\PP(\hom_{\phi}(F,\Gamma)=1 \mid \phi\in \Phi_{\alpha}) \leq  \PP(\Gamma(\phi(u),\phi(v))=0 \mid \phi\in \Phi_{\alpha}) = 1-\quot{\Gamma}{\cV}(\alpha(u),\alpha(v))\]
	for all mappings $\alpha\colon V(F)\rightarrow[k]$ and all non-edges $uv \in \binom{V(F)}{2} \setminus E(F)$.
	We can apply the last two inequalities to bound the induced homomorphism density
	$\hom(F,\quot{\Gamma}{\cV})$ from below
	as follows
	\begin{eqnarray*}
	&&\hom(F,\quot{\Gamma}{\cV}) \\
	&\geq& \sum_{\alpha\colon V(F)\rightarrow[k]} \frac{1}{k^f} \cdot
	\left(\prod_{uv\in E(F)} \quot{\Gamma}{\cV}(\alpha(u),\alpha(v))
	\prod_{uv \in\binom{V(F)}{2} \setminus E(F)} \(1-\quot{\Gamma}{\cV}(\alpha(u),\alpha(v))\) \right)\\
	&=&
	\sum_{\alpha\colon V(F)\rightarrow[k]}\PP(\phi\in\Phi_{\alpha}) \cdot
	\left(\prod_{uv\in E(F)} \quot{\Gamma}{\cV}(\alpha(u),\alpha(v))
	\prod_{uv \in \binom{V(F)}{2} \setminus E(F)} \(1-\quot{\Gamma}{\cV}(\alpha(u),\alpha(v))\) \right)\\
	&\geq& \sum_{\alpha\colon V(F)\rightarrow[k]} \PP(\phi\in \Phi_\alpha) \cdot
	\PP(\hom_{\phi}(F,\Gamma)=1 \mid \phi\in \Phi_{\alpha})^{\binom{f}{2}}.
	\end{eqnarray*}
 	Since $x\mapsto x^{\binom{f}2}$ is convex for every $x\geq 0$, we get that
	\begin{align*}
	\hom(F,\quot{\Gamma}{\cV})	
	&\geq \(\sum_{\alpha\colon V(F) \to [k]} \PP(\phi\in \Phi_\alpha) \cdot \PP(\hom_{\phi}(F,\Gamma)=1 \mid \phi\in \Phi_{\alpha})\)^{\binom{f}2}= \hom(F,\Gamma)^{\binom{f}{2}}
	\end{align*}
	as desired.
\end{proof}

Note that the converse of Lemma~\ref{lem:quotdensity} does not hold in
general. 
For instance, the complete bipartite graph $\Gamma = K_{n,n}$ satisfies 
$\hom(K_3, K_{n,n}) = 0$, but $\hom(K_3, \quot{K_{n,n}}{\cV})$ is close 
to~$1/8$ if $\cV$ is a random equipartition of large size. 
However, $\hom(F,\Gamma)$ and $\hom(F,\quot{\Gamma}{\cV})$ are known to be close, provided $\cV$ is a Frieze-Kannan-regular partition.
To make this precise we need to set up some notation.
We define the \emph{cut-distance} between two weighted graphs $R_1,R_2 \in \cG^*(V)$ to be
\[
\dcut(R_1,R_2)=\frac1{|V|^2} \max_{\alpha,\beta} \Big|\sum_{\substack{x\in V\\ y\in V}} \alpha(x)\cdot (R_1(x,y)-R_2(x,y))\cdot \beta(y)\Big|,
\]
where the maximum is over all functions $\alpha,\beta\colon V\to[0,1]$.\footnote{This is equivalent to the definition of cut-distance in~\cite[Theorem 8.10]{LovaszHomBook}.}
Note that, by the triangle inequality and the fact that~$\alpha$
and~$\beta$ are bounded by~$1$, we have
$\dcut(R_1,R_2) \leq d_1(R_1,R_2)$.
Moreover, we can bound the difference of the induced homomorphism densities of a graph $F$ in $R_1$ and $R_2$  in terms of their cut distance:
\begin{lemma}\label{lem:hom-dens-F-in-R1-R2-bound-cut-distance}
	Let $R_1,R_2 \in \cG^*(V)$ be weighted graphs and $F$ a graph on $f$ vertices.
	Then
	$|\hom(F,R_1)-\hom(F,{R_2})| \leq f^2 \dcut({R_1},{R_2})$.
\end{lemma}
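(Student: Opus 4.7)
The plan is a telescoping (hybrid) argument that swaps the ``edge factors'' of $\hom_\phi$ from $R_1$ to $R_2$ one pair at a time, bounding each swap by the cut-distance. Enumerate the pairs $\binom{V(F)}{2} = \{p_1, \dots, p_m\}$ with $m = \binom{f}{2}$. For a mapping $\phi\colon V(F) \to V$, each pair $p_s = i_sj_s$ contributes a factor
\[
w_s^{(r)}(\phi) = \begin{cases} R_r(\phi(i_s), \phi(j_s)) & \text{if } p_s \in E(F), \\ 1 - R_r(\phi(i_s), \phi(j_s)) & \text{otherwise,} \end{cases}
\]
which lies in $[0,1]$ since $R_r \in \cG^*$. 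Define for $t = 0, 1, \dots, m$ the hybrid
\[
H_t = \frac{1}{|V|^f} \sum_\phi \prod_{s=1}^{t} w_s^{(2)}(\phi) \prod_{s=t+1}^{m} w_s^{(1)}(\phi),
\]
so that $H_0 = \hom(F,R_1)$ and $H_m = \hom(F,R_2)$. By the triangle inequality, it suffices to show $|H_t - H_{t-1}| \leq \dcut(R_1, R_2)$ for each $t$; summing the $m \leq f^2$ contributions gives the claim. (In fact one obtains the slightly sharper bound $\binom{f}{2}\dcut$.)

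Fix $t$ and let $p_t = \{u, v\}$. A direct computation gives
\[
H_t - H_{t-1} = \frac{\eta_t}{|V|^f} \sum_\phi \Big(\prod_{s \neq t} w_s(\phi)\Big) \big(R_2(\phi(u), \phi(v)) - R_1(\phi(u), \phi(v))\big),
\]
where $\eta_t \in \{+1, -1\}$ records whether $p_t \in E(F)$, and each $w_s(\phi)$ (for $s \neq t$) is some $w_s^{(1)}$ or $w_s^{(2)}$, hence is in $[0,1]$. The crucial structural point is that once $p_t$ is removed, no remaining pair $p_s$ has \emph{both} endpoints in $\{u, v\}$, so each $w_s(\phi)$ depends on at most one of $\phi(u), \phi(v)$.

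Split each $\phi$ as a mapping $\psi\colon V(F) \setminus \{u,v\} \to V$ together with the values $x = \phi(u)$, $y = \phi(v)$. For each fixed $\psi$, factor
\[
\prod_{s \neq t} w_s(\phi) = \alpha_\psi(x) \cdot \beta_\psi(y) \cdot \gamma_\psi,
\]
where $\alpha_\psi$ collects the factors from pairs containing $u$ but not $v$, $\beta_\psi$ those containing $v$ but not $u$, and $\gamma_\psi$ those avoiding both. Each of $\alpha_\psi, \beta_\psi\colon V \to [0,1]$ and $\gamma_\psi \in [0,1]$. The definition of cut-distance then yields
\[
\Big| \sum_{x, y \in V} \alpha_\psi(x) \beta_\psi(y) \big(R_2(x,y) - R_1(x,y)\big) \Big| \leq |V|^2 \dcut(R_1, R_2).
\]
Summing over the $|V|^{f-2}$ choices of $\psi$, using $\gamma_\psi \leq 1$ and dividing by $|V|^f$, gives $|H_t - H_{t-1}| \leq \dcut(R_1, R_2)$, which completes the argument.

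The only potential obstacle is the bookkeeping in the factorization $\alpha_\psi(x)\beta_\psi(y)\gamma_\psi$, which is what lets us invoke the cut-distance definition directly; this works cleanly exactly because singling out the pair $\{u,v\}$ decouples the dependence on $x$ and $y$ among the remaining factors. Everything else reduces to the triangle inequality and the $[0,1]$ bound on weights.
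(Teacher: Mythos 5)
Your proposal is correct and follows essentially the same route as the paper: the hybrid/telescoping step is exactly the paper's Fact~\ref{fact:cube}, and the factorization $\alpha_\psi(x)\beta_\psi(y)\gamma_\psi$ before invoking the cut-distance definition matches the paper's $\alpha^{\vec{x}}(x_u)\beta^{\vec{x}}(x_v)$ decomposition. Your observation that the argument actually yields the sharper constant $\binom{f}{2}$ is consistent with the paper's computation, which also obtains $\binom{f}{2}\dcut(R_1,R_2)$ before relaxing to $f^2$.
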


In the proof of Lemma~\ref{lem:hom-dens-F-in-R1-R2-bound-cut-distance} we will use the following fact.
\begin{fact}\label{fact:cube}
	Let $a_1,\dots,a_t$ and $b_1,\dots,b_t$ be real numbers. Then 
	\begin{equation}
    \prod_{i=1}^t a_i - \prod_{i=1}^t b_i = \sum_{j=1}^t \prod_{i<j}a_i\cdot (a_j-b_j)
    \cdot \prod_{i>j}b_i .%
    \end{equation}
\end{fact}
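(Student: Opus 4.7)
The plan is to prove Fact~\ref{fact:cube} by a simple telescoping argument. For each $0\leq j\leq t$, define the hybrid product
\[
P_j \;=\; \Bigl(\prod_{i\leq j} a_i\Bigr)\Bigl(\prod_{i>j} b_i\Bigr),
\]
with the convention that empty products equal $1$. Then $P_0=\prod_{i=1}^t b_i$ and $P_t=\prod_{i=1}^t a_i$, so the left-hand side of the claimed identity equals the telescoping sum $P_t-P_0=\sum_{j=1}^t(P_j-P_{j-1})$. The key observation is that $P_j$ and $P_{j-1}$ differ only in the $j$-th factor, so
\[
P_j-P_{j-1} \;=\; \Bigl(\prod_{i<j} a_i\Bigr)\,(a_j-b_j)\,\Bigl(\prod_{i>j} b_i\Bigr),
\]
which is exactly the $j$-th summand appearing on the right-hand side of the fact. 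Summing over $j$ from $1$ to $t$ yields the identity.

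There is essentially no obstacle here: the statement is a purely algebraic identity valid in any commutative ring, and the telescoping argument above is both straightforward and entirely routine. One could alternatively prove it by induction on $t$: the base case $t=1$ is trivial, and for the inductive step one writes
\[
\prod_{i=1}^{t+1} a_i - \prod_{i=1}^{t+1} b_i \;=\; \Bigl(\prod_{i=1}^{t} a_i\Bigr)(a_{t+1}-b_{t+1}) + \Bigl(\prod_{i=1}^{t} a_i - \prod_{i=1}^{t} b_i\Bigr)b_{t+1},
\]
applies the inductive hypothesis to the second term, and collects terms. Either route requires only bookkeeping, so I would present the telescoping proof as it is the shorter and more transparent of the two.
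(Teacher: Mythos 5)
Your telescoping argument is correct and is essentially identical to the paper's own proof, which expands each summand as $\prod_{i\leq j}a_i\prod_{i>j}b_i-\prod_{i<j}a_i\prod_{i\geq j}b_i$ and lets the sum telescope. Your hybrid-product notation $P_j$ just makes the same cancellation explicit.
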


\begin{proof}%
Observe that
\begin{eqnarray*}
 &&\sum_{j=1}^t \prod_{i<j}a_i\cdot (a_j-b_j)
    \cdot \prod_{i>j}b_i 
    = \sum_{j=1}^t \prod_{i\leq j}a_i
    \cdot \prod_{i>j}b_i -  \sum_{j=1}^t \prod_{i<j}a_i 
     \prod_{i\geq j}b_i  
     = \prod_{i=1}^t a_i - \prod_{i=1}^t b_i,
\end{eqnarray*}
which is the desired result.
\end{proof}

\begin{proof}[Proof of Lemma~\ref{lem:hom-dens-F-in-R1-R2-bound-cut-distance}]
	Let $R_1,R_2 \in \cG^*(V)$ be weighted graphs with $n=|V|$ and $F \in \cG$ be a graph with $V(F)=[f]$.
	For every pair $uv\in \binom{[f]}2$, any vertices $x,y\in V$ and $i\in \{1,2\}$, define 
	\[
	g^{(u,v)}_i(x,y) = \begin{cases} R_i(x,y) &\text{if $uv\in E(F)$} \\
	1-R_i(x,y) &\text{if $uv\notin E(F)$.} \\
	\end{cases}
	\]
	We have 
	\[n^f(\hom(F,R_1)-\hom(F,R_2)) = 
	\sum_{(x_1,\dots,x_f)} 
    \biggl(\kern-1.3ex
	\prod_{uv\in\binom{[f]}2} g_1^{(u,v)}(x_u,x_v) - 
	\prod_{uv\in\binom{[f]}2} g_2^{(u,v)}(x_u,x_v)
    \biggr),\]
	where the sum is over all $n^f$ sequences of length $f$ of vertices in $V$. 
	By considering an arbitrary linear ordering $<$ of the elements $uv\in \binom{[f]}2$, 
	we apply Fact~\ref{fact:cube} to get 
	\begin{align*}
	&n^f(\hom(F,R_1) - \hom(F,R_2)) \\
	&=\sum_{(x_1,\dots,x_f)} 
	\sum_{uv\in\binom{[f]}2} 
	\Big(\kern-1.1ex\prod_{ab<uv}g_1^{(a,b)}(x_a,x_b)\Big) \cdot
	(g_1^{(u,v)}(x_u,x_v) - g_2^{(u,v)}(x_u,x_v))\cdot
	\Big(\kern-1.1ex\prod_{ab>uv}g_2^{(a,b)}(x_a,x_b)\Big) \\
	&= 
	\sum_{uv\in\binom{[f]}2} 
	\sum_{\vec{x}}\sum_{x_u,x_v}
	\Big(\kern-1.1ex\prod_{ab<uv}g_1^{(a,b)}(x_a,x_b)\Big) \cdot
	(g_1^{(u,v)}(x_u,x_v) - g_2^{(u,v)}(x_u,x_v))\cdot
	\Big(\kern-1.1ex\prod_{ab>uv}g_2^{(a,b)}(x_a,x_b)\Big),
	\end{align*}
	where the sum $\sum_{\vec{x}}$ is over all sequences 
	$\vec{x} = (x_w)_{w\in [f]\setminus\{u,v\}}$
	of $f-2$ vertices in $V$, indexed by vertices $w\in[f]$, with $w\not=u,v$.
	
	Fix $uv\in \binom{[f]}2$ and a sequence $\vec{x}$ as above. Then there 
	must be functions $\alpha^{\vec{x}}, \beta^{\vec{x}}\colon V\to[0,1]$ for which 
	we can write
	\begin{equation*}\label{eq:prodpulauv}
	\prod_{ab<uv}g_1^{(a,b)}(x_a,x_b)\cdot \prod_{ab>uv}g_2^{(a,b)}(x_a,x_b) = \alpha^{\vec{x}}(x_u)\cdot \beta^{\vec{x}}(x_v),
	\end{equation*}
	since no term on the left side of the equation depends on both $x_u$ and $x_v$.
	More precisely, there are three types of factors in each term of the above sum. Those that depend on $u$ (i.e., where $a$ or $b$ equals $u$), those depending on $v$ (i.e., where $a$ or $b$ equals $v$) and those that do neither depend on $u$ nor $v$ (i.e., where neither $a$ nor $b$ equals $u$ or $v$).
	We can then group the factors that depend on $u$ as a product, which we call $\alpha^{\vec{x}}(x_u)$, and group the remaining factors as another product, which we call $\beta^{\vec{x}}(x_v)$.

	Hence, 
	\begin{eqnarray*}
	&& n^f|\hom(F,R_1)-\hom(F,R_2)| \\
	&\leq& 
	\left| 
	\sum_{uv\in\binom{[f]}2} 
	\sum_{\vec{x}} \sum_{x_u,x_v}
	\left(\alpha^{\vec{x}}(x_u) (g_1^{(u,v)}(x_u,x_v) - g_2^{(u,v)}(x_u,x_v)) \beta^{\vec{x}}(x_v)
	\right) \right|\\
	&\leq& 
	\left| 
	\sum_{uv\in E(F)} 
	\sum_{\vec{x}} \sum_{x_u,x_v}
	\left(\alpha^{\vec{x}}(x_u) (R_1(x_u,x_v)-R_2(x_u,x_v)) \beta^{\vec{x}}(x_v)\right)
	\right|
	\\
	& +&
	\left| 
	\sum_{uv\in \binom{V(F)}{2} \setminus E(F)}
	\sum_{\vec{x}} \sum_{x_u,x_v}
	\left(\alpha^{\vec{x}}(x_u) (R_2(x_u,x_v) - R_1(x_u,x_v)) \beta^{\vec{x}}(x_v) \right)
	\right|.
	\end{eqnarray*}
	
	By the definition of the cut-distance, the absolute value of
        each of the sums over $x_u,x_v$ can be bounded by
        $\dcut(R_1,R_2)n^2$.  Therefore
	\begin{eqnarray*}
	&&|\hom(F,R_1)-\hom(F,R_2)|  \\
	&\leq& \frac{1}{n^f} \left(|E(F)| \cdot n^{f-2} \cdot \dcut({R_1},R_2) \cdot n^2+ \left(\binom{f}{2}-|E(F)|\right)\cdot n^{f-2}\cdot \dcut(R_1,R_2\cdot)n^2\right) \\
	&\leq&  f^2 \dcut({R_1},R_2),
	\end{eqnarray*}
	as required.
	\end{proof}

For an equipartition $\cV=\{V_i\}_{i=1}^k$  of a graph $\Gamma\in\cG(V)$,
 we define the \emph{blown-up reduced graph} $\Gamma_{\cV}\in
 \cG^*(V)$ by setting, for every $1 \leq i \leq j \leq k$ and vertices
 $u\in V_i$ and $v\in V_j$, 
\[\Gamma_{\cV}(u,v) = \frac{\Gamma(V_i,V_j)}{|V_i||V_j|}.\]
 We remark that the blown-up reduced graph has approximately the
  same induced homomorphism density of small graphs
  as the reduced graph.  Indeed, let $\Gamma$ be a graph with vertex  set
  $V$ of size $n$ and consider an equipartition $\cV=\{V_i\}_{i=1}^k$
  of $V$. Let $R= \quot{\Gamma}{\cV}$ and let $F$ be a graph on $f$
  vertices. Denote the sets of functions $\phi\colon V(F) \to V$ and
  $\alpha \colon V(F) \to [k]$ by $\Phi$ and $\Psi$, respectively,
  and, given $\alpha \in \Psi$, let $\Phi_\alpha$ be the subset of
  $\Phi$ defined in~\eqref{def:phi_alpha}. By definition, given any
  $\phi \in \Phi_\alpha$, we have $\hom_{\phi}(F,\Gamma_{\cV})  =\hom_{\alpha}(F,R)$, so that
\begin{eqnarray}\label{equ:hom-density-blown-up-reduced-graph}
	&&|\hom(F,R)-\hom(F,\Gamma_{\cV})| = \left|\frac{1}{|\Psi|} \sum_{\alpha \in \Psi} \hom_{\alpha}(F,R) -\frac{1}{|\Phi|} \sum_{\phi \in \Phi} \hom_{\phi}(F,\Gamma_{\cV}) \right| \nonumber \\
	&&= \frac{1}{|\Phi|} \left| \sum_{\alpha \in \Psi} \left(\frac{|\Phi| \hom_{\alpha}(F,R)}{|\Psi|} - \sum_{\phi \in \Phi_{\alpha}} \hom_{\phi}(F,\Gamma_{\cV})\right) \right|\nonumber \\
	&&= \frac{1}{|\Phi|} \left| \sum_{\alpha \in \Psi} \sum_{\phi \in \Phi_{\alpha}} \left( \frac{|\Phi| }{|\Psi| |\Phi_\alpha|} \hom_{\alpha}(F,R) -  \hom_{\phi}(F,\Gamma_{\cV}) \right) \right|\\
	&&\leq  \frac{1}{|\Phi|} \left| \sum_{\alpha \in \Psi} \sum_{\phi \in \Phi_{\alpha}} \left(\hom_{\alpha}(F,R) - \hom_{\phi}(F,\Gamma_{\cV}) \right)\right| +  \sum_{\alpha \in \Psi} \sum_{\phi \in \Phi_{\alpha}} \left| \frac{1}{|\Psi||\Phi_{\alpha}|}- \frac{1}{|\Phi|}  \right| \nonumber\\
	&&= \sum_{\alpha \in \Psi} \sum_{\phi \in \Phi_{\alpha}}
           \left| \frac{1}{|\Psi||\Phi_{\alpha}|}- \frac{1}{|\Phi|}
           \right|\phantom{\leq}
           \smash{\stackrel{\text{(*)}}{\leq}}\;\,
           \max\left\{ \left|\frac{n^f}{k^f \prod_{i=1}^f |V_{\alpha(i)}|}-1\right| \colon \alpha \in \Psi \right\} \leq \frac{2kf}{n}.\nonumber 
	\end{eqnarray}	
Inequality~(*) can be derived as follows:
\begin{eqnarray*}
\sum_{\alpha \in \Psi} \sum_{\phi \in \Phi_{\alpha}} \left| \frac{1}{|\Psi||\Phi_{\alpha}|}- \frac{1}{|\Phi|}  \right|&=&\sum_{\alpha \in \Psi} \sum_{\phi \in \Phi_{\alpha}} \frac{\big| |\Phi|-|\Psi||\Phi_{\alpha}| \big|}{|\Psi||\Phi_{\alpha}||\Phi| } \\
&=&\sum_{\alpha \in \Psi} \sum_{\phi \in \Phi_{\alpha}}  \frac{\left|n^f-k^f \prod_{i=1}^f |V_{\alpha(i)}| \right|}{k^f n^f \prod_{i=1}^f |V_{\alpha(i)}|} \\
&\leq& n^f \max\left\{ \frac{\left|n^f-k^f\prod_{i=1}^f |V_{\alpha(i)}|\right|}{k^f n^f \prod_{i=1}^f |V_{\alpha(i)}|}  \colon \alpha \in \Psi \right\}\\
&=&\max\left\{ \left|\frac{n^f}{k^f \prod_{i=1}^f |V_{\alpha(i)}|}-1\right| \colon \alpha \in \Psi \right\}.
\end{eqnarray*}
Since the partition is equitable, we know that, for every $i \in [k]$, we have $n/k-1 \leq |V_i| \leq n/k+1$. In particular, by Bernoulli's inequality,
\begin{equation*}
  \prod_{i=1}^f |V_{\alpha(i)}| \geq \left(\frac{n}{k}-1\right)^f \geq
  \left(\frac{n}{k}\right)^f\left(1-\frac{fk}{n}\right),
\end{equation*}
and hence
\begin{eqnarray*}
\max\left\{ \left|\frac{n^f}{k^f \prod_{i=1}^f |V_{\alpha(i)}|}-1\right| \colon \alpha \in \Psi \right\}  &\leq&
 \left| \frac{n^f}{k^f\left(\frac{n}{k}\right)^f\left(1-\frac{fk}{n}\right)} -1 \right| \\
&=& \left| \frac{n^f}{ n^f - kfn^{f-1}} - 1 \right| \\
&=&  \frac{kfn^{f-1}}{ n^f - kfn^{f-1}} \leq \frac{2kf}{n}
\end{eqnarray*}
for $n \geq 2kf$.

The equipartition $\cV$ is $\gamma$-\emph{FK-regular} if $\dcut(\Gamma,\Gamma_{\cV}) \leq \gamma$.
The Frieze--Kannan Regularity Lemma~\cite{FK99} tells us
that every sufficiently large graph admits an FK-regular partition whose size only depends on the approximation parameter $\gamma$. We shall use the version of this result stated in~\cite[Lemma 9.3]{LovaszHomBook}.
\begin{lemma}[Frieze--Kannan Regularity Lemma]\label{lem:frieze-kannan-regularity} 
	For every~$\gamma > 0$ and every $k_0>0$, there is~$K=k_0 \cdot
	2^{\poly(1/\gamma)}$ such that every graph $\Gamma$ on $n \geq K$ vertices
	admits a~$\gamma$-FK-regular equipartition into $k$ classes, 
	where $k_0\leq k \leq K$.
\end{lemma}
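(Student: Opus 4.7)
The plan is to apply the classical energy-increment method of Frieze and Kannan. Define the \emph{mean-square density} of an equipartition $\cV = \{V_i\}_{i=1}^{k}$ of $V(\Gamma)$ by
\[
q(\cV) = \frac{1}{n^2}\sum_{i,j=1}^{k} \frac{\Gamma(V_i,V_j)^2}{|V_i|\,|V_j|},
\]
which lies in $[0,1]$ and is nondecreasing under refinements (by convexity of $t\mapsto t^2$). Starting from any equipartition of $\Gamma$ into $k_0$ classes, I would iterate a refinement process that either certifies $\gamma$-FK-regularity or strictly increases $q$ by a fixed amount.

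The key step is an energy-boost claim: if $\cV$ is not $\gamma$-FK-regular, then there exist $S, T \subseteq V(\Gamma)$ with $|\Gamma(S,T)-\Gamma_{\cV}(S,T)| \geq (\gamma/4)\,n^2$. This uses the standard observation that replacing the $[0,1]$-valued test functions $\alpha,\beta$ in the definition of $\dcut$ by $\{0,1\}$-valued indicators loses at most a factor of $4$. Once such $S$ and $T$ are fixed, form the refinement $\cV'$ of $\cV$ obtained by intersecting every class with $S$, $V(\Gamma)\setminus S$, $T$, and $V(\Gamma)\setminus T$; a defect-form Cauchy--Schwarz computation shows $q(\cV') \geq q(\cV)+(\gamma/4)^2$, and $\cV'$ has at most $4|\cV|$ parts. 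Since $q$ is bounded by $1$, this iteration terminates within $16/\gamma^2$ rounds, producing a partition with at most $k_0 \cdot 4^{16/\gamma^2} = k_0 \cdot 2^{\poly(1/\gamma)}$ classes.

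The main obstacle is that intersecting with $S$ and $T$ does not respect equipartitions. I would handle this by fixing the target bound $K = k_0 \cdot 2^{\poly(1/\gamma)}$ in advance and interleaving each refinement step with an equitization step, subdividing every class into blocks of a common size $\lfloor n/K \rfloor$ (the negligible remainders may be gathered into one small exceptional class). This preserves the equipartition structure and alters the blown-up reduced graph at no more than $O(Kn)$ ordered pairs of vertices, contributing only $O(K/n)$ to the cut-distance, which is absorbed by the hypothesis $n \geq K$. Carrying the regularity parameter $\gamma/2$ through the argument compensates for this perturbation and yields the final bound in the claimed form.
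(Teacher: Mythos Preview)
The paper does not give its own proof of this lemma; it simply quotes the result from the literature (Frieze--Kannan, in the formulation of \cite[Lemma~9.3]{LovaszHomBook}). Your outline is the standard energy-increment argument, and the overall strategy is correct and does yield the stated bound $K=k_0\cdot 2^{\poly(1/\gamma)}$.

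Two small points. First, with the paper's definition of $\dcut$ (test functions $\alpha,\beta\colon V\to[0,1]$), passing to $\{0,1\}$-valued witnesses costs nothing: the bilinear form is affine in each coordinate of $\alpha$ and of $\beta$ separately, so the optimum is already attained at an extreme point of $[0,1]^V\times[0,1]^V$. Your factor of~$4$ (which would be needed for $[-1,1]$-valued tests) is harmless but unnecessary, and you could take the energy increment to be $\gamma^2$ rather than $(\gamma/4)^2$.

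Second, your equitization scheme as literally described does not work: chopping every class into blocks of size $\lfloor n/K\rfloor$ at \emph{each} step already produces roughly $K$ classes after the first equitization, so the very next refinement by $S,T$ would overshoot the target~$K$. The usual remedy is to run the entire iteration with arbitrary (not necessarily balanced) partitions, obtain a $(\gamma/2)$-FK-regular partition $\cV'$ into $k'\le k_0\cdot 4^{O(1/\gamma^2)}$ classes, and then equitize \emph{once} at the end by splitting each class of~$\cV'$ into pieces of a common size roughly $\gamma n/k'$; the leftover vertices form a set of at most $\gamma n$ elements whose redistribution perturbs $\dcut(\Gamma,\Gamma_{\cV})$ by at most $\gamma/2$. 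Since this final step is still a refinement up to that small exceptional set, the cut-distance can only improve modulo the $\gamma/2$ error, and one obtains an equipartition into $k$ classes with $k_0\le k\le K=k_0\cdot 2^{\poly(1/\gamma)}$ as required.
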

Note that Lemma~\ref{lem:frieze-kannan-regularity} is close to best possible, since Conlon and Fox~\cite{conlon12:_bound} found graph instances where the
number of classes in any $\gamma$-FK-regular partition is at
least~$k\geq2^{1/(2^{60}\gamma^2)}$ (for a previous result, see
Lov\'{a}sz and Szegedy~\cite{LS07}).
Now we are ready to show that hereditary graph properties are attestable.
\begin{theorem}\label{thm:hereditary-attestable}
	For every family~$\cF$ of graphs, the property~$\Forb(\cF)$ is
	$f$-attestable for~$f(\eps) = 2^{\poly(\delta^{-M^2},M,\log n_0)}$,  
	where $\delta,~M$ and $n_0$ are as in Lemma~\ref{lem:removal}
	with input $\cF$ and $\eps$.
\end{theorem}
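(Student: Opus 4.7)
The plan is to use the Frieze--Kannan Regularity Lemma to construct the attesting reduced graph and then chain a sequence of four density comparisons to rule out any bad ``near-reducible'' extension. Fix $\eps > 0$, let $M$, $\delta$, $n_0$ be as in Lemma~\ref{lem:removal}, and let $\Gamma \in \Forb(\cF)$ have $n = |V(\Gamma)| \geq f(\eps)^{3/2}$. First I would apply Lemma~\ref{lem:frieze-kannan-regularity} with parameters $\gamma$ and $k_0$ (to be calibrated below) to obtain a $\gamma$-FK-regular equipartition $\cV$ of $\Gamma$ with $k_0 \leq |\cV| = k \leq K \leq f(\eps)$, and set $R = \quot{\Gamma}{\cV} \in \quot{\Gamma}{\Pi_K}$. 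The claim is that $R \in \cPstar\eps$.

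To verify the claim, let $\Gamma' \in \AlmostReducible{R}$, which by definition admits an equipartition $\cV'$ of size $k = |V(R)|$ with $\dist(\quot{\Gamma'}{\cV'}, R) \leq 2/k$; in particular $|V(\Gamma')| \geq k$. Suppose for contradiction that $\dist(\Gamma', \Forb(\cF)) > \eps$. Arranging $k_0 \geq n_0$ makes Lemma~\ref{lem:removal} applicable to $\Gamma'$, yielding $F \in \cF$ with $|V(F)| \leq M$ and $\hom(F, \Gamma') \geq \delta$. The target contradiction will come from~\eqref{equ:homFG-bound}, which forces $\hom(F, \Gamma) \leq M^2 / n$ because $\Gamma \in \Forb(\cF)$.

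I would then chain four density estimates along $\Gamma' \to \quot{\Gamma'}{\cV'} \to R \to \Gamma_{\cV} \to \Gamma$. Lemma~\ref{lem:quotdensity} gives $\hom(F, \quot{\Gamma'}{\cV'}) \geq \delta^{\binom{|V(F)|}{2}} \geq \delta^{M^2/2}$. Since $\dcut \leq \dist$, Lemma~\ref{lem:hom-dens-F-in-R1-R2-bound-cut-distance} yields $|\hom(F, R) - \hom(F, \quot{\Gamma'}{\cV'})| \leq 2M^2/k$. The blow-up estimate~\eqref{equ:hom-density-blown-up-reduced-graph} supplies $|\hom(F, R) - \hom(F, \Gamma_{\cV})| \leq 2Mk/n$. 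Finally, $\gamma$-FK-regularity of $\cV$ means $\dcut(\Gamma, \Gamma_{\cV}) \leq \gamma$, so Lemma~\ref{lem:hom-dens-F-in-R1-R2-bound-cut-distance} once more gives $|\hom(F, \Gamma) - \hom(F, \Gamma_{\cV})| \leq M^2 \gamma$. Combining the four estimates via the triangle inequality and~\eqref{equ:homFG-bound} produces
\[
\delta^{M^2/2} \leq \frac{M^2}{n} + \frac{2M^2}{k} + \frac{2Mk}{n} + M^2 \gamma.
\]

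The main obstacle is calibrating $\gamma$ and $k_0$ so that the right-hand side is strictly less than $\delta^{M^2/2}$, while keeping the Frieze--Kannan bound $K \leq k_0 \cdot 2^{\poly(1/\gamma)}$ within $2^{\poly(\delta^{-M^2}, M, \log n_0)}$. Setting $\gamma = \delta^{M^2}/(10 M^2)$ controls the term $M^2 \gamma$, and choosing $k_0 = \max\{n_0,\, 64 M^2 \delta^{-M^2}\}$ controls $2M^2/k$ and also $2Mk/n \leq 2M/\sqrt{k}$ via the hypothesis $n \geq K^{3/2} \geq k^{3/2}$; the term $M^2/n$ is then dominated by the others. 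With these choices $1/\gamma = \poly(\delta^{-M^2}, M)$ and $\log k_0 = O(\log n_0) + \poly(\delta^{-M^2}, M)$, hence $K \leq 2^{\poly(\delta^{-M^2}, M, \log n_0)} = f(\eps)$, as required. The resulting contradiction confirms $R \in \cPstar\eps$, establishing the $f$-attestability of $\Forb(\cF)$.
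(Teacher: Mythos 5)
Your proposal is correct and follows essentially the same route as the paper's proof: take a Frieze--Kannan regular partition of $\Gamma$, claim its reduced graph lies in $\cPstar\eps$, and refute any far counterexample $\Gamma'\in\AlmostReducible{R}$ by chaining the removal lemma through Lemma~\ref{lem:quotdensity}, Lemma~\ref{lem:hom-dens-F-in-R1-R2-bound-cut-distance}, and the blow-up estimate~\eqref{equ:hom-density-blown-up-reduced-graph} to contradict~\eqref{equ:homFG-bound}. The only differences are cosmetic choices of constants in $\gamma$ and $k_0$, which you calibrate correctly.
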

\begin{proof}
	Let $\delta,M$ and $n_0$ be as in Lemma~\ref{lem:removal} with 
	inputs $\cF$ and $\eps$. 
	Let $K$ be as in Lemma~\ref{lem:frieze-kannan-regularity} with input
	\begin{equation*}
		k_0 = \max\left\{n_0, \frac{2}{\delta},{\frac{4M^2}{\delta^{M^2}}}\right\} \qquad \text{and} \qquad \gamma = \frac{\delta^{M^2}}{8M^2}.
	\end{equation*}
	Note that $K = 2^{\poly(\delta^{-M^2},\,M,\,\log{n_0})}$.
	Let $G\in\Forb(\cF)$ be a graph with $n\geq K^{3/2}$ vertices. 
	We claim that if $\cV$ is a $\gamma$-FK-regular equipartition 
	of $G$ into $k_0\leq k\leq K$ classes, then 
	$R:=\quot{G}{\cV} \in\cPstar\eps$.
	This will prove the theorem for $f(\eps)=K$.
	
	Suppose by contradiction that there is a graph $H\in \AlmostReducible{R}$ such that
	$\dist(H,\Forb(\cF))>\eps$. Since $|V(H)|\geq k_0\geq n_0$, 
	Lemma~\ref{lem:removal} asserts there must be a graph $F\in\cF$, 
	with $|V(F)|\leq M$, for which $\hom(F,H)\geq \delta$.
	As $H \in \AlmostReducible{R}$, there is a partition $\cV'$ of $H$ into $k$ classes for which $\dist(\quot{H}{\cV'},R)\leq 2/k$.
	It follows from Lemma~\ref{lem:quotdensity} that 
	\[\hom(F,\quot{H}{\cV'}) \geq \delta^{M^2}.\]
	Hence, by Lemma~\ref{lem:hom-dens-F-in-R1-R2-bound-cut-distance} 
    \[\hom(F,R) \geq \delta^{M^2} - M^2\dcut(\quot{H}{\cV'},R)\geq
     \delta^{M^2} - M^2\dist(\quot{H}{\cV'},R)\geq
     \delta^{M^2}- \frac{2M^2}{k} \geq \frac{\delta^{M^2}}{2}.\]
	On the other hand, $R$ is the reduced graph of $G$ with respect to
	the $\gamma$-FK-regular partition $\cV$.
	So by the above, Lemma~\ref{lem:hom-dens-F-in-R1-R2-bound-cut-distance} together with~\eqref{equ:hom-density-blown-up-reduced-graph}
	implies that 
	\begin{align*}
		\hom(F,G) &\geq \hom(F,G_{\cV}) - M^2   \dcut({G},{G_{\cV}})
		\\&\geq \hom(F,R) -  \frac{2kM}{n} -  M^2 \gamma
		\\&\smash{\stackrel{\text{(**)}}{\geq}}\phantom{\geq} \frac{\delta^{M^2}}{2}   -  2M^2 \gamma 
		\\&\geq \frac{\delta^{M^2}}{4},
		\end{align*}
		where~(**) holds because  
		$n \geq K^{3/2} \geq K^{1/2}k,$ and  $K^{1/2} \geq \frac{2}{M\gamma}=\frac{16M}{\delta^{M^2}}$ by our choice of $K$.
	But this contradicts~\eqref{equ:homFG-bound}, which asserts that $\hom(F,G)$ is at most
	$\binom{M}2\frac1n \leq \frac{M^2}{2k_0} \leq \frac{\delta^{M^2}}{8}$.
\end{proof}
Note that Theorem~\ref{thm:hereditary-estimable} follows from
Theorem~\ref{thm:attestable->estimable}
and~\ref{thm:hereditary-attestable}.

\section*{Acknowledgement}
\label{sec:acknowledgements}

We are thankful to anonymous referees for their careful reading and
insightful comments. Their suggestions have led to significant
improvements in the presentation of our results.

\endgroup

\end{document}